\documentclass{article}
\usepackage{amssymb}
\usepackage{amsmath}
\usepackage{amsfonts}
\usepackage{graphicx}

\newtheorem{theorem}{Theorem}[section]
\newtheorem{lemma}[theorem]{Lemma}

\newtheorem{remark}[theorem]{Remark}

\newcommand{\qed}{\hfill \rule{2.3mm}{2.3mm}}

\newcommand{\om}{\omega}
\newcommand{\al}{\alpha}

\newcommand{\la}{\lambda}

\newcommand{\ve}{\varepsilon}
\newcommand{\vp}{\varphi}

\newcommand*{\esssup}{\operatorname*{ess\phantom{|}\!sup}}
\newcommand*{\essinf}{\operatorname*{ess\phantom{|}\!inf}}

\newcommand{\bee}{\begin{equation}}
\newcommand{\ee}{\end{equation}}
\newcommand{\uu}{\bar{u}_\ve}

\newcommand{\R}{\mathbb{R}}
\newcommand{\N}{\mathbb{N}}

\newcommand{\Z}{\mathbb{Z}}

\newcommand{\x}{x/\varepsilon}

\newcommand{\LL}{{\cal L}}
\newcommand{\reff}[1]{(\ref{#1})} 
  
\begin{document}

\title{\bf
An H-convergence-based implicit function theorem for homogenization of nonlinear non-smooth elliptic systems 
}

\newcounter{thesame}
\setcounter{thesame}{1}
\author{
Lutz Recke \thanks{Humboldt University of Berlin, Institute of Mathematics, Rudower Chaussee 25, 12489 Berlin, Germany.
		{\small   E-mail:
			{\tt lutz.recke@hu-berlin.de}}
}}
\date{}

\maketitle

\begin{abstract}
\noindent
We consider  homogenization of semilinear 
elliptic PDE systems of the type
$$
\partial_{x_i}\Big(a^{\al \beta}_{ij}(\ve,x)
\partial_{x_j}u^\beta(x)
+f_i^\al(x,u(x))\Big)=0 \mbox{ in }  \Omega, \; \al=1,\ldots,n,
$$
with homogeneous Dirichlet boundary conditions.
Here $\ve>0$ is the small homogenization parameter, $\Omega \subset \R^N$ is a bounded Lipschitz domain, 
$a^{\al \beta}_{ij}(\ve,\cdot)\in L^\infty(\Omega)$ satisfy the Legendre ellipticity condition,
and the maps $u \in C(\overline\Omega;\R^n) \mapsto
f_i^\al(\cdot,u(\cdot)) \in L^{p_0}(\Omega)$ are $C^1$-smooth with certain $p_0>N$.
We suppose that the family of diffusion tensor functions $[a^{\al \beta}_{ij}(\ve,\cdot)]$ H-converges for $\ve \to 0$
and that
$$
\mbox{$N=2$ or 
$a^{\al \beta}_{ij}=0$ for $\al>\beta$.}
$$

Our result is of implicit function theorem type: If $u_0$ is a non-degenerate weak solution to the homogenized problem, then 
for $\ve \approx 0$ there exists exactly one weak solution $u=u_\ve$
with $\|u-u_0\|_\infty \approx 0$, and $\|u_\ve-u_0\|_\infty\to 0$ for $\ve \to 0$.

The main tools of the proofs are
gradient estimates of Meyers and  Morrey type for solutions to linear elliptic systems with non-smooth data.
Neither assumptions about global solution  uniqueness are needed
nor additional smoothness of $\partial \Omega$ or $a_{ij}^{\al \beta}$ or $f_i^\beta$ or $u_0$ 
nor growth restrictions 
%of the space dimension $N$ or 
for  $f_i^\al(x,\cdot)$.
\end{abstract}

%\begin{keyword} 
{\it Keywords:} H-convergence; non-periodic homogenization; 
semilinear elliptic systems; 
non-smooth data; existence and local uniqueness;  
%implicit function theorem; 
%homogenization  error estimates in 
strong $L^{\infty}$-convergence
%\end{keyword}

{\it MSC: } 35B27\; 35D30\; 35J57\; 35J61\; 35R05 \;47J07

\section{Introduction and main results}
\setcounter{equation}{0}
\setcounter{theorem}{0}

We consider homogenization of
boundary value problems for
semilinear second-order elliptic systems of the type
\bee
\label{BVP}
\left.
\begin{array}{l}
\partial_{x_i}\Big(a^{\al \beta}_{ij}(\ve,x)
\partial_{x_j}u^\beta(x)
+f_i^\al(x,u(x))\Big)=0 \mbox{ in }  \Omega,\\
u^\al(x)=0\mbox{ on } \partial\Omega
\end{array}
\right\}
\al=1,\ldots,n.
\ee
Here $\ve>0$ is the small homogenization parameter, and $n \in \N$ is the system dimension.
In \reff{BVP} and in what follows  repeated indices are to be summed over 
 $\al,\beta,\ldots=1,\ldots,n$ and
$i,j,\ldots=1,\ldots,N$, where $N \in \N$ is the space dimension.
 We suppose that
\begin{eqnarray}
\label{Omass}
&&\mbox{$\Omega$ is a bounded Lipschitz domain in $\R^N$,}\\
\label{aass}
&&a_{ij}^{\al \beta}(\ve,\cdot) \in L^\infty(\Omega), \; \esssup\left\{\left|a_{ij}^{\al \beta}(\ve,x)\right|:\, \ve>0, \,x \in \Omega\right\}<\infty,
\\
\label{mona}
&&\essinf\left\{a^{\al \beta}_{ij}(\ve,x)v^\al_i v^\beta_j:\; \ve>0,\, x\in \Omega,\,v \in \R^{nN},\; v^\al_iv^\al_i=1\right\}>0,\\
\label{cass}
&&u \in C(\overline\Omega;\R^n) \mapsto 
f_i^{\al}(\cdot,u) \in L^{p_0}(\Omega) \mbox{ are $C^1$-smooth with certain } 
p_0>N.
\end{eqnarray}
More in detail formulated, assumption \reff{cass} means that there exists $p_0>N$ such that for all $u \in 
C(\overline\Omega;\R^n)$ the functions 
$f_i^{\al}(\cdot,u(\cdot))$ belong to  $L^{p_0}(\Omega)$, and that the maps
$u \mapsto f_i^{\al}(\cdot,u(\cdot))$
are $C^1$-smooth from the function space $C(\overline\Omega;\R^n)$ into the function space $L^{p_0}(\Omega)$.
This is satisfied if, for example, 
the functions $f_i^\al$ are products of the type
$
f_i^\al(x,u)=g_{i}^{\al}(x)h_i^\al(u)$
%\mbox{ 
(no summation over $\al$ or $i$)
with $g_i^\al\in L^{p_0}(\Omega)$ and 
$h_i^\al \in C^1(\R^n)$
or if $f_i^\al$ are  finite sums of such products. More general sufficient conditions for \reff{cass} are stated below in Appendix 2 of the present paper.

Further, we suppose that the family of diffusion tensor functions $[a^{\al \beta}_{ij}(\ve,\cdot)]$ H-converges for $\ve \to 0$ to a tensor function $[\hat a^{\al \beta}_{ij}]$
with $\hat a^{\al \beta}_{ij} \in L^\infty(\Omega)$, which means the following:
\bee
\label{H}
\left.
\begin{array}{l}
\mbox{For any $\phi \in W^{-1,2}(\Omega;\R^n)$
and for the corresponding $u_\ve,\hat u \in W_0^{1,2}(\Omega;\R^n)$,}\\
\mbox{defined by }
\partial_{x_i}(a^{\al \beta}_{ij}(\ve,\cdot)
\partial_{x_j}u_\ve^\beta)=
\partial_{x_i}(\hat a^{\al \beta}_{ij}
\partial_{x_j}\hat u^\beta)=\phi^\al, 
\mbox{ one has for $\ve \to 0$ that}\\
\mbox{$u^\al_\ve \rightharpoonup \hat u^\al$
weakly in $W_0^{1,2}(\Omega)$
and $a^{\al \beta}_{ij}(\ve,\cdot)
\partial_{x_j}u_\ve^\beta
\rightharpoonup
\hat a^{\al \beta}_{ij}
\partial_{x_j}\hat u^\beta$
weakly in $L^{2}(\Omega)$.}
\end{array}
\right\}
\ee
Finally, we suppose that 
\bee
\label{zwei}
N=2 
\ee
or 
\bee
\label{gzwei}
a_{ij}^{\al \beta}(\ve,x)=0 \mbox{ for   all } \al>\beta  \mbox{ and } \ve >0 \mbox{ and for almost all } x \in \Omega.
\ee

A vector function $u \in W_0^{1,2}(\Omega;\R^n)\cap C(\overline\Omega;\R^n)
$ is called weak solution to the boundary value problem~\reff{BVP} if it satisfies the variational equation
$$
\int_\Omega\Big(
a^{\al \beta}_{ij}(\ve,x)
\partial_{x_j}u^\beta(x)
+f_i^\al(x,u(x))\Big)\partial_{x_i}\vp^\al(x)dx=0
\mbox{ for all } \vp \in W_0^{1,2}(\Omega;\R^n),
$$
and similarly for the homogenized boundary value problem
\bee
\label{hombvp}
\left.
\begin{array}{l}
\partial_{x_i}\Big(\hat{a}^{\al \beta}_{ij}(x)
\partial_{x_j}u^\beta(x)+
f_i^\al(x,u(x))\Big)
=0 \mbox{ in } \Omega,\\
u^\al(x)=0 \mbox{ on } \partial\Omega
\end{array}
\right\}
\al=1,\ldots,n
\ee
and for its linearization \reff{linhombvp}. 
Also, we denote by
$
\|u\|_\infty:=\sum_{\al=1}^n \esssup_{x \in \Omega} |u^\al(x)|
$
the norm in the Lebesgue space $L^{\infty}(\Omega;\R^n)$.

Our main result is the following
\begin{theorem} 
\label{main}
Suppose \reff{Omass}-\reff{H}.
Further, suppose that \reff{zwei} or that \reff{gzwei} is satisfied.
Finally, suppose that there is given a weak solution
$u=u_0 \in W_0^{1,2}(\Omega;\R^n)
\cap C(\overline\Omega;\R^n)$
to \reff{hombvp} such that 
the linearized homogenized  boundary value problem
\bee
\label{linhombvp}
\left.
\begin{array}{l}
\partial_{x_i}\Big(\hat{a}^{\al \beta}_{ij}(x)
\partial_{x_j}u^\beta(x)+\partial_{u^\beta}f_i^\al(x,u_0(x))u^\beta(x)\Big)
=0 \mbox{ in } \Omega,\\
u^\al(x)=0 \mbox{ on } \partial\Omega
\end{array}
\right\}
\al=1,\ldots,n
\ee
does not have nontrivial weak solutions $u \in W_0^{1,2}(\Omega;\R^n)\cap C(\overline\Omega;\R^n)$.

Then there exist $\ve_0>0$ and $\delta>0$ such that for all $\ve \in (0,\ve_0]$ there exists exactly one weak solution
$u=u_\ve \in W_0^{1,2}(\Omega;\R^n)
\cap C(\overline\Omega;\R^n)$ to \reff{BVP} with $\|u-u_0\|_\infty <\delta$.
Moreover,
%\label{infest}
$\|u_\ve-u_0\|_\infty \to 0$ for $\ve \to 0$.
\end{theorem}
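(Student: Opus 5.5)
The plan is to reduce the statement to an abstract implicit-function-type result for families of equations $F_\ve(u)=0$, where the linearizations converge only weakly (in the H-convergence sense), not in norm. I will work in the space $U:=W_0^{1,p}(\Omega;\R^n)$ with $p\in(N,p_0]$ close to $N$. By Sobolev embedding $U\hookrightarrow C(\overline\Omega;\R^n)$, so by \reff{cass} the nonlinearity is $C^1$ from $U$ to $L^{p}$. Set $V:=W^{-1,p}(\Omega;\R^n)$ and define
$$F_\ve(u):=\partial_{x_i}\bigl(a_{ij}^{\al\beta}(\ve,\cdot)\partial_{x_j}u^\beta+f_i^\al(\cdot,u)\bigr),$$
and $F_0$ analogously with $\hat a$.

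The key analytic input is a uniform isomorphism property for the linear operators $L_\ve u:=\partial_{x_i}(a^{\al\beta}_{ij}(\ve,\cdot)\partial_{x_j}u^\beta)$ from $W_0^{1,p}$ to $W^{-1,p}$, with $p>N$ and bounds independent of $\ve$. For general systems with $N\ge3$ this fails, and this is exactly where \reff{zwei} or \reff{gzwei} enters.
\begin{itemize}
\item If $N=2$, Meyers' estimate (Gröger's version for Lipschitz domains) gives isomorphy for $p\in(2,2+\eta)$, with $\eta$ and the bounds depending only on the ellipticity and $L^\infty$ constants.
\item Under \reff{gzwei} the system is triangular, so each diagonal block is a scalar equation. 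The scalar equations can be solved successively using De Giorgi–Nash–Stampacchia/Morrey-type estimates. These give $L^\infty$ and Hölder bounds for $-\partial_i(a_{ij}\partial_j w)=\partial_i g_i$ with $g\in L^{p_0}$, uniformly in $\ve$.
\end{itemize}
In the second case it may be more convenient to replace $U$ by $W_0^{1,2}\cap C^{0,\gamma}$, or simply to use $C(\overline\Omega)$ norms. What I need is the uniform estimate
$$\|u\|_\infty\le C\|g\|_{L^{p_0}}\quad\text{whenever } L_\ve u=\partial_i g_i,$$
together with a uniform Hölder bound, which gives compactness in $C(\overline\Omega)$.

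Next I establish uniform invertibility of the linearizations $F_\ve'(u_0)=L_\ve+\partial_{x_i}(\partial_{u^\beta}f_i^\al(\cdot,u_0)\,\cdot)$ for small $\ve$, and I expect this to be the main obstacle. I argue by contradiction. Suppose there are $\ve_k\to0$ and $v_k$ with $\|v_k\|_\infty=1$ and $F_{\ve_k}'(u_0)v_k=\partial_i g_{i,k}$, where $\|g_k\|_{L^{p_0}}\to0$. Write $L_{\ve_k}v_k=\partial_i(g_{i,k}-\partial_u f_i(u_0)v_k)$. The right-hand side is bounded in $L^{p_0}$, so the uniform Hölder bounds from the previous step give a subsequence with $v_k\to v$ uniformly and weakly in $W^{1,2}$. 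The compactness of H-convergence (the div-curl lemma, built into \reff{H}) then lets me pass to the limit: the right-hand sides converge strongly in $W^{-1,2}$, so the fluxes converge weakly to $\hat a\nabla v$. Hence $v$ is a weak solution of \reff{linhombvp} with $\|v\|_\infty=1$, contradicting nondegeneracy. Invertibility of each single operator follows from the same estimates via Fredholm alternative: an elliptic isomorphism plus a compact perturbation.

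Then I apply a Newton/fixed-point argument. Since $u_0$ need not solve $F_\ve(u)=0$, I use an approximate solution. Let $w_\ve$ solve $L_\ve w_\ve=-\partial_i f_i(\cdot,u_0)$. By \reff{H} together with the uniform Hölder bounds, $w_\ve\to u_0$ uniformly. The residual is $F_\ve(w_\ve)=\partial_i\bigl(f_i(w_\ve)-f_i(u_0)\bigr)$, which tends to zero in the $L^{p_0}$-flux norm by \reff{cass}. Finally I solve $u=u-F_\ve'(w_\ve)^{-1}F_\ve(u)$ by the contraction principle on a small $\|\cdot\|_\infty$-ball around $w_\ve$. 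The invertibility of $F_\ve'(w_\ve)$ follows from that of $F_\ve'(u_0)$ by perturbation, since $\|\partial_u f(w_\ve)-\partial_u f(u_0)\|_{L^{p_0}}\to0$. The contraction estimate comes from the $C^1$ property in \reff{cass} and the uniform bound $\|F_\ve'(w_\ve)^{-1}\partial_i g_i\|_\infty\le C\|g\|_{L^{p_0}}$.

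This yields existence and local uniqueness in a $\delta$-ball, with $\delta$ independent of $\ve$. It also gives $\|u_\ve-u_0\|_\infty\le\|u_\ve-w_\ve\|_\infty+\|w_\ve-u_0\|_\infty\to0$, because the first term is controlled by the residual.
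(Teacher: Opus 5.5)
Your architecture is the paper's. You get $\ve$-uniform invertibility of the linearization at $u_0$ by contradiction, via compactness and the H-convergence limit (Lemma~\ref{Rob} with Theorem~\ref{Shentheorem}). Your approximate solutions $w_\ve$ are exactly the paper's $\bar u_\ve$ from \reff{barudef1}, and you contract around them. In the case \reff{zwei}, your choice of $W_0^{1,p}$, $W^{-1,p}$ with Gr\"oger's exponent is exactly Section~\ref{choicezwei}. Running the contraction in $\|\cdot\|_\infty$, with the linearization taken at $w_\ve$, is a harmless variant; it even gives uniqueness in the $\|\cdot\|_\infty$-ball directly, without the paper's last step. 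The gap is the case \reff{gzwei} with $N\ge 3$. There everything rests on the $\ve$-uniform bound $\|u\|_{C^{0,\gamma}}\le C\|g\|_{L^{p_0}}$ whenever $L_\ve u=\partial_i g_i$, and your argument for it does not close.

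You propose to solve the diagonal scalar equations successively, using De Giorgi--Nash--Stampacchia bounds for data in $L^{p_0}$. But once $u^n$ is found, the equation for $u^{n-1}$ contains the coupling flux $a^{n-1,n}_{ij}(\ve,\cdot)\partial_{x_j}u^n$. For $N\ge3$ and merely bounded measurable coefficients, $\nabla u^n$ is in general only in $L^{2+\delta}$ with small $\delta$, not in $L^q$ for any $q>N$. This is also why your initial $W^{1,p}$, $p>N$, setting is unavailable here, even for $n=1$. So a scalar $L^{p_0}\to C^{0,\gamma}$ estimate cannot be iterated along the triangular structure.

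The missing idea is a data class that survives the back-substitution: Morrey spaces $L^{2,\la}$ with $\la>N-2$. They contain $L^{p_0}$ by \reff{pembed}, and by Caccioppoli's inequality they also contain the gradients of H\"older continuous solutions. You then need the scalar theory with Morrey data, up to the Lipschitz boundary and uniformly in the ellipticity bounds: data in $L^{2,\la}$ yields a gradient in $L^{2,\la}$ and hence a $C^{0,\gamma}$ solution. This is precisely Theorem~\ref{Maxreg2}, and it is why the paper takes $U=W_0^{1,2,\la}$ and $V=W^{-1,2,\la}$. Your passing mention of ``Morrey-type estimates'' points in this direction, but the estimate you actually formulate and use is too weak. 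Once Theorem~\ref{Maxreg2} is supplied, your $C(\overline\Omega)$-based scheme does go through, with the Morrey spaces hidden inside the linear estimate.
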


The idea of the proof of Theorem \ref{main} is rather simple: We use, similarly to the proof of the classical implicit function theorem, 
%Banach's fixed point theorem for the 
the sequence $u_1,u_2,\ldots \in W_0^{1,2}(\Omega;\R^n)
\cap C(\overline\Omega;\R^n)$, which 
is  defined by
$$
%\left.
\begin{array}{l}
\partial_{x_i}\Big(a^{\al \beta}_{ij}(\ve,x)\partial_{x_j}u_{l+1}^\beta(x)
+\partial_{u^\beta}f_i^\al(x,u_0(x))
\left(u_{l+1}^\beta(x)-u_l^\beta(x)\right)+f_i^\al(x,u_l(x))\Big)=0 \mbox{ in } \Omega,\\
u^\al_{l+1}(x)=0 \mbox{ on } \partial\Omega
\end{array}
%\right\}
%\al=1,\ldots,n.
$$
for $\al=1,\ldots,n$.
But it is not simple to answer the question in which function space (with respect to which norm) this sequence converges.
We take Sobolev spaces $W^{1,p}(\Omega;\R^n)$ with $p>2$, but $p\approx 2$, in the case \reff{zwei} and  Sobolev-Morrey spaces $W^{1,2,\la}(\Omega;\R^n)$ with $\la>N-2$, but $\la \approx N-2$, in the case \reff{gzwei}.
Also, it is not simple to answer the question what should be an appropriate starting element $u_1$ of the sequence. It turns out that $u_0$ is not an appropriate
starting element.
We take an $\ve$-depending $u_1$ defined by
\bee
\label{initial}
\left.
\begin{array}{l}
\partial_{x_i}\Big(a^{\al \beta}_{ij}(\ve,x)
\partial_{x_j}u_{1}^\beta(x)
+f_i^\al(x,u_0(x))\Big)=0 \mbox{ in } \Omega,\\
u_1^\al(x)=0 \mbox{ on } \partial \Omega,
\end{array}
\right\} 
\al=1,\ldots,n.
\ee
Remark that neither the Sobolev spaces nor the Sobolev-Morrey spaces nor this starting element appear in the formulation of Theorem \ref{main}, they are hidden only.

This approach is well-known for singularly perturbed nonlinear ODEs and elliptic and parabolic PDEs, see \cite{Butetc,But2022,Fiedler,
Magnus1,Magnus2,NURS,OmelchenkoRecke2015,Recke2022,
ReckeOmelchenko2008}. It is based on the generalized implicit function theorem
of R.J. Magnus \cite[Theorem 1.2]{Magnus1} and on several of its modifications. Hence, these 
generalized implicit function theorems permit a common approach to nonlinear singular perturbation and  homogenization.

Roughly speaking, the reason why the classical implicit function theorem cannot be applied
to~\reff{BVP} is the following:
The linearized operators  converge for $\ve \to 0$ 
in a very weak sense only
(in the sense of H-convergence), 
not with respect to a uniform operator norm.  
Remark that in the classical implicit function theorem one cannot omit, in general, the assumption, that the linearized operators converge for $\ve \to 0$ with respect to a uniform operator norm (cf. \cite[Section~3.6]{Katz}).

\begin{remark}
The concept of H-convergence was introduced by F. Murat and L. Tartar (see, e.g. \cite{Tartar} or \cite[Section 13]{Ci}).
Assumption \reff{H} is satisfied, for example, for periodic homogenization, i.e. 
$$
a^{\al \beta}_{ij}(\ve,x)
=\tilde a^{\al \beta}_{ij}(\x)
\mbox{ with $\Z^N$-periodic $
\tilde a^{\al \beta}_{ij}
\in L^\infty(\R^N)$}
$$ 
or for periodic homogenization with localized defects, i.e.
$
a^{\al \beta}_{ij}(\ve,x)
=\tilde a^{\al \beta}_{ij}(\x)+
b^{\al \beta}_{ij}(\x)$
with
$\Z^N$-periodic $\tilde a^{\al \beta}_{ij}
\in L^\infty(\R^N)$ and
$b^{\al \beta}_{ij} \in L^\infty(\R^N)$ and
$$
\lim_{r \to 0}\frac{1}{r^N}\int_{\|y-x\|<r} |b^{\al \beta}_{ij}(y)|dy=0
\mbox{ for almost all } x \in \R^N,
$$
see \cite{Blanc,BDL}. In these cases the homogenized diffusion tensor is $x$-independent and can be represented
rather explicit as
$$
\hat{a}^{\al\beta}_{ij}=
\int_{(0,1)^N}\left(\tilde a^{\al \beta}_{ij}(y)
+\tilde a^{\al \gamma}_{ik}(y)
\partial_{y_k}v^{\gamma \beta}_j(y)\right)dy,
$$
where the $nN$ correctors $v^{\beta}_j=(v^{1 \beta}_j,\ldots,v^{n \beta}_j)
\in W^{1,2}_{\rm loc}(\R^N;\R^n)$, $\beta=1,\ldots,n$, $j=1,\ldots,N$,
are are the weak solutions to
the $nN$ cell problems
$$
\left.
\begin{array}{l}
\partial_{y_i}
\left(\tilde a^{\al \beta}_{ij}(y)+
\tilde a^{\al \gamma}_{ik}(y)
\partial_{y_k}v^{\gamma \beta}_j(y)\right)
=0\mbox{ for } y\in \R^N,\\
v^{\al \beta}_j(\cdot+z)=v^{\al \beta}_j \mbox{ for } z \in \Z^N,\;
\displaystyle\int_{(0,1)^N} v^{\al \beta}_j(y)dy=0
\end{array}
\right\}
\al=1,\ldots,n.
$$
In the case of periodic homogenization with localized defects and with one space dimension, i.e. $N=1$, correctors are not needed, in this case the representation is
$$
%\label{N1}
[\hat a^{\al \beta}]=\left(\int_0^1 
[\tilde a^{\al \beta}(y)]^{-1}dy\right)^{-1}.
%\mbox{ if } a^{\al \beta}(\ve,x)
%=\tilde a^{\al \beta}(\x)
%\mbox{ and } \tilde a^{\al \beta}(\cdot+1)=
%a^{\al \beta}.
%A(y):=[a^{\al \beta}(y)] \in M_n.
$$
For this particular case Theorem \ref{main} has been proved in \cite{ReckeNonper}.

For H-convergence of families of diffusion tensor functions describing laminated or perforated materials see, e.g. \cite[Chapters 12, 15 and 16]{Tartar}.

%Finally, let us remark that \reff{N1} shows that the map 
%$[a_{ij}^{\al \beta}]\mapsto
%[\hat a_{ij}^{\al \beta}]$ is homogeneous, but not additive, in general.
\end{remark}

\begin{remark}
We do not believe that 
%in the case of
%$N>2$ space dimensions 
the assertions of
Theorem \ref{main}
remain true if neither \reff{zwei} nor \reff{gzwei} are satified, in general.
%for   general elliptic systems (i.e. for systems with  system dimension $n>1$ and with diffusion tensors which are far from being triangular), no matter if localized defects are present or not.
The reasons for this guess are the well-known examples of unbounded weak solutions to linear elliptic systems 
with  $L^\infty$-coefficients and
with system dimension $n>1$, space dimension 
$N>2$ and with diffusion tensors which are far from being triangular (cf., e.g. \cite[Section 8.7]{BenF},
\cite[Section 12.2]{Chen},
\cite[Section 6.2]{Giusti}).

Let us remark that condition \reff{gzwei} is satisfied for systems with
$a_{ij}^{\al \beta}=0$ for $\al\not=\beta$ (which often are  called
weakly coupled systems or systems with diagonal main part or systems without cross-diffusion).
Especially, condition \reff{gzwei} is satisfied in the case $n=1$
(scalar elliptic equations).
\end{remark}

\begin{remark}
It turns out that Theorem \ref{main} is new even in the linear case, i.e. if
$$
f_i^\al(x,u)=g_i^\al(x)
\mbox{ for all } u \in \R^n
\mbox{ and almost all } x \in \Omega, \mbox{ where } g_i^\al \in L^{p_0}(\Omega) \mbox{ with } p_0>N.
$$
In this case the boundary value problems 
\reff{BVP} and \reff{hombvp}
are uniquely weakly solvable (because of the Lax-Milgram lemma),
but it is not clear if the weak solutions belong to $L^\infty(\Omega;\R^n)$ and if 
they satisfy
$\|u_\ve-u_0\|_\infty \to 0$ for $\ve \to 0$.
\end{remark}

\begin{remark}
Theorem \ref{main} is true also for mixed boundary conditions of the type
$$
\left.
\begin{array}{l}
\Big(a^{\al \beta}_{ij}(\ve,x)\partial_{x_j}u^\beta(x)
+f_i^\al(x,u(x))\Big)\nu_i(x)=f^\al(x,u(x))
\mbox{ on } \Gamma,\\
u^\al(x)=0  \mbox{ on } \partial \Omega \setminus \Gamma,
\end{array}
\right\}
\al=1,\ldots,n
$$
with $C^1$-maps $u \in C(\overline\Omega;\R^n) \mapsto f^{\al}(\cdot,u(\cdot)) \in L^{p_0-1}(\Gamma)$,
where 
$\Gamma\subseteq \partial\Omega$, 
$\nu:\partial\Omega \to \R^N$ is the unit outer normal vector field on $\partial\Omega$,  
and where the set $\Omega \cup \Gamma$ 
is regular in the sense of
\cite[Definition~2]{G}.
The reason for that is that
the regularity results Theorem \ref{Maxreg1} and 
Theorem \ref{Maxreg2}
(and, hence, the Theorems \ref{maxreg1} and 
\ref{maxreg2})
below are true for those boundary conditions also.

Remark that the class of bounded open sets $\Omega \subset \R^N$ which are  
regular the sense of
\cite[Definition~2]{G} (with $\Gamma=\emptyset$) is sligtly larger than the class of bounded  Lipschitz domains. In particular, bi-Lipschitz transformations of regular sets are regular again.
Hence, Theorem \ref{main} is true not only for bounded Lipschitz domains
$\Omega\subset \R^N$, but also for bounded open sets $\Omega\subset \R^N$ which are regular in the sense of \cite[Definition 2]{G}.

In \cite{II} is presented a result
of the type of Theorem \ref{main} 
for periodic homogenizaton of 2D semilinear 
elliptic systems with non-smooth data and with various (including mixed) boundary conditions.

In \cite{GrR,Recke1995} are proven results of implicit function theorem type for 
quasilinear elliptic systems with non-smooth data and mixed boundary conditions, but without highly oscillating coefficients.
\end{remark}

\begin{remark}
Theorem \ref{main} is true also for for elliptic systems with lower order terms of the type
$$
\partial_{x_i}\Big(a^{\al \beta}_{ij}(\ve,x)
\partial_{x_j}u^\beta(x)
+f_i^\al(x,u(x))\Big)=f_j^{\al \beta}(x,u(x))
\partial_{x_j}u^\beta(x)+f^{\al}(x,u(x))
$$
with $C^1$-maps $u \in C(\overline\Omega;\R^n) \mapsto f_j^{\al \beta}(\cdot,u(\cdot)) \in L^{\infty}(\Omega)$ and 
 $u \in C(\overline\Omega;\R^n) \mapsto f^\al(\cdot,u(\cdot)) \in L^{p_0/2}(\Omega)$.
\end{remark}

%\begin{remark} In the case \reff{gzwei} the assertions of Theorem \ref{main} remain true if assumption $a_{ij}^{\al \beta}=0$ for $\al>\beta$ is weakened to $\|a_{ij}^{\al \beta}(\ve,\cdot)\|_\infty\approx 0$ for $\al>\beta$ and $\ve>0$. This follows because Theorem \ref{maxreg2} below is true also for  elliptic systems which are close to be triangular. How small $\|a_{ij}^{\al \beta}(\ve,\cdot)\|_\infty$ for $\al>\beta$ should be, this depends on
%$\Omega$, $\|a_{ij}^{\al \beta}(\ve.\cdot)\|_\infty$ for $\al\le\beta$
%and on the essential infimum in \reff{mona}.
%\end{remark}

\begin{remark}
In fact,  below we show slightly more than claimed in 
Theorem \ref{main}, namly that
the solutions 
$u_\ve$ to \reff{BVP}
%and $u_0$ 
are H\"older continuous.
%and the corresponding H\"older norm of $u_\ve-u_0$ tends to zero
%for $\ve \to 0$. 
\end{remark}

%\begin{remark}
%Assumption \reff{cass} is satisfied, for example, if the functions $f_i^\al$ are products of the type
%$$
%f_i^\al(x,u)=g_{i}^{\al}(x)h_i^\al(u)
%\mbox{ (no summation over $\al$ and $i$)}
%$$
%with $g_i^\al\in L^{p_0}(\Omega)$ and 
%$h_i^\al \in C^1(\R^n)$
%or if $f_i^\al$ are  finite sums of such products.
%\end{remark}

\begin{remark}
It is well-known that assumption \reff{H} implies that the homogenized diffusion coefficient functions $\hat{a}^{\al \beta}_{ij}$ 
satisfy the Legendre condition
\reff{mona} also, i.e.
\bee
\label{monAhat}
\essinf\left\{\hat a^{\al \beta}_{ij}(x)v^\al_i v^\beta_j:\; x \in \Omega,\,v \in \R^{nN},\; v^\al_iv^\al_i=1\right\}>0.
\ee
Therefore,
the rather implicit assumption of Theorem \ref{main}, that there do not exist weak solutions $u\not=0$
to \reff{linhombvp}, is satisfied, for example,  
if $\|\partial_{u^\beta}f^\al_i(\cdot,u_0(\cdot))\|_\infty$ are sufficiently small.
But smallness of $\|\partial_{u^\beta}f^\al_i(\cdot,u_0(\cdot))\|_\infty$
is far from being necessary for that assumption.

But remark that, if the diffusion coefficients $a_{ij}^{\al \beta}$ satify the condition \reff{gzwei}, then the homogenized diffusion coefficients $\hat a_{ij}^{\al \beta}$ do not, in general.
\end{remark}

\begin{remark}
Because of the non-smoothness of the data of \reff{BVP} we are not able to estimate the rate of the convergence  $\|u_\ve-u_0\|_{\infty}\to 0$ for $\ve \to 0$.

For periodic homogenization 
for linear  elliptic systems with
smooth data and Dirichlet boundary conditions  
it can be shown by means of
Green function estimates
that
\bee
\label{Pest1}
\|u_\ve-u_0\|_\infty=O(\ve)  
\mbox{ for } \ve \to 0
\ee
(cf. \cite[Theorem 3.4]{Kenig}).
The same is true for 
linear scalar elliptic equations with smooth data and Dirichlet boundary conditions, where in the proofs the maximum principle is used
(cf. \cite[Section~2.4]{Ben}).
We do not believe that \reff{Pest1} is true for problems of the type \reff{BVP}, in general, even
in the case of periodic homogenization and of linear equations.

In \cite{II} is considered periodic homogenization for semilinear elliptic systems with non-smooth data of the type \reff{BVP} with $N=2$
and with $u_0 \in W_0^{2,p}(\Omega;\R^n)$ with $p>2$, 
and it is shown that
$$
\|u_\ve-u_0\|_\infty=O(\ve^\om)  
\mbox{ for } \ve \to 0
\mbox{ for all $\om\in [0,1/2)$}.
$$
Sinilarly, in \cite{III} is considered periodic homogenization for semilinear elliptic scalar equations with non-smooth data of the type \reff{BVP} with $n=1$
and with $u_0 \in W_0^{2,p}(\Omega)$ with $p>N$, 
and it is shown that
$$
\|u_\ve-u_0\|_\infty=O(\ve^\om)  
\mbox{ for } \ve \to 0
\mbox{ for all $\om\in [0,1/N)$}.
$$

For periodic homogenization of linear elliptic problems with smooth data 
also rates of 
$W^{1,2}$-estimates 
of $u_\ve-\uu$
are known, where $\uu$ is an appropriate  family of approximate solutions, defined by $u_0$ and  correctors, see, e.g.  
%\cite{Pakhnin} 
%homogeneous Dirichlet problems for elliptic systems of the type \reff{BVP}  with $\partial \Omega \in C^2$, $b_{ij}^{\al \beta}=c_i^{\al,\beta}=0$ and $f_i^\al \in W^{1,2}(\Omega)$ are considered, and it is shown that
%\bee
%\label{onehalf}
%\|u_\ve-\uu\|_{1,2}=O(\ve^{1/2}) \mbox{ for } \ve \to 0
%\ee
\cite[Theorem 14.3]{Ci1},
\cite[Theorems 3.2.2 and 3.2.3]{Shen},
\cite[Theorem 3.2]{Xu}) and
\cite{O1,O2,Pakhnin,Zhuge}.
\end{remark}

\begin{remark}
There exists an astonishing analogy between  approaches to singularly perturbed problems and to homogenization problems (cf. \cite{I,II}). The analogy  consists in
the use of so-called approximate solutions, i.e. of families (with family parameter $\ve>0$)
of functions which satisfy the problem (i.e.  differential equations,  boundary conditions etc.) approximately for small singular perturbation parameter $\ve$ or for small homogenization parameter $\ve$, respectively.
Usually those families of approximate solutions are "constructed" by means of ansatzes, streched variables, correctors, formal asymptotic expansions, smoothing operators, cut-off functions etc., and they are choosen according of the requirements of the problem.
Often it is not clear at the very beginning, if for $\ve \to 0$ those approximate solutions approximate (and in which sense) exact solutions of the given problem or not.

In the present paper we use the family of approximate solutions $\uu$, which is  defined in \reff{barudef1} or, what is the same, by $\uu:=u_1$ with $u_1$ from 
\reff{initial}.
The advantage of this choice is that 
no additional regularity assumptions for $u_0$ and, hence, no smoothing operators are needed. This is new and allows to work in Morrey spaces (where smooth functions are not dense)
and, this way, to prove Theorem \ref{main} in the case \reff{gzwei}.

The disadvantage of these approximate solutions $\uu$ is that they
are defined as the solutions of the linear boundary value problems~\reff{initial}
and, hence, to determine $\uu$ numerically is not much simpler than to determine the exact solutions $u_\ve$ to \reff{BVP} numerically. Another disadvantage of $\uu$ is that they do not help to estimate the rate of convergence of $\|u_\ve-u_0\|_\infty \to 0$ for $\ve \to 0$, because Lemma \ref{barulemma} below is proved by assuming the contrary.

Other families of approximate solutions $\uu$, which are constructed by means of two-scale asymptotic expansions, the function $u_0$, correctors, cut-off functions and smoothing operators, are used, e.g. in \cite[Chapters 3.1 and 3.2]{Shen} or \cite{II,III}, and under additional smoothness assumptions they allow to estimate the  rate of convergence $\|u_\ve-\uu\|_\infty \to 0$ for $\ve \to 0$.
%like \reff{onehalf}.

For another analogy of techniques used in singular perturbation theory and in homogenization theory see Lemma \ref{Rob} below.
\end{remark}

\begin{remark}
\label{Neuk}
What concerns existence and local uniqueness
for periodic homogenization 
of semilinear elliptic PDEs
(without assumption of global uniqueness), besides \cite{II,III}
we know only the result 
\cite{Bun}  for scalar semilinear elliptic PDEs of the type
$
\nabla\cdot(a(\x) 
\nabla u(x))=f(x)g(u(x)),
$
where the nonlinearity $g$ is supposed to have a sufficiently small local Lipschitz constant (on an appropriate bounded interval). Let us mention also \cite{Lanza1,Lanza2,Riva}, where existence and local uniqueness for  periodic homogenization problems for the linear Poisson equation with highly oscillating nonlinear Robin boundary conditions is shown. There the specific structure of the problem (no highly oscillating diffusion coefficients) allows to apply the classical implicit function theorem.

Nonperiodic homogenization results for semilinear elliptic PDEs are presented also in \cite[Section 5.2]{N}. There the solutions are globally unique because of monotonicity assumptions with respect to the nonlinearities.
\end{remark}

Our paper is organized as follows:

In Section \ref{abset} we introduce the abstract equations \reff{abBVP} and \reff{abhomBVP}, which are equivalent to the weak settings of the boundary value problems \reff{BVP} and \reff{hombvp}, respectivly. 

In Section \ref{rift}
we formulate and prove an  abstract result of implicit function theorem type for equation \reff{abBVP} which will be used later in order to prove Theorem \ref{main}.
Roughly speaking, this abstract result claims the following:
If appropriate function spaces 
%of $W_0^{1,2}(\Omega;\R^n) \cap L^{\infty}(\Omega;\R^n)$
(cf. Subsection \ref{spaces})
exist, then one can use certain families of
appropriate  approximate solutions to \reff{BVP}
(cf. Subsection \ref{approximate}) 
in order to solve 
\reff{abBVP} locally uniquely by means of Banach's fixed point theorem 
(cf. Subsection \ref{Banach})
similarly to the proof of the classical implicit function theorem.
Here we use Theorem \ref{Shentheorem} about H-convergence of the diffusion tensors fields $[a_{ij}^{\al \beta}(\ve,\cdot)]$ for $\ve \to 0$.

In Sections \ref{choicezwei} and \ref{choicegzwei} we show that certain Sobolev spaces 
or certain Sobolev-Morrey spaces 
are appropriate in the cases \reff{zwei} 
or \reff{gzwei}, respectively, to apply Theorem \ref{Banach} to \reff{abBVP}.
Here we use Theorems 
\ref{Maxreg1} and \ref{Maxreg2} about maximal solution regularity for linear elliptic systems with non-smooth data.

In Appendix 1 we formulate two known results about maximal solution regularity for linear elliptic systems with non-smooth data: Theorem \ref{Maxreg1} concerns the pairs of Sobolev spaces $W_0^{1,p}(\Omega;\R^n)$ and $W^{-1,p}(\Omega;\R^n)$ with $p\approx 2$ in the case \reff{zwei}, and Theorem \ref{Maxreg2} concerns the pairs of Sobolev-Morrey spaces $W_0^{1,2,\la}(\Omega;\R^n)$ and $W^{-1,2,\la}(\Omega;\R^n)$ with $\la \approx N-2$ in the case \reff{gzwei}.

Finally, in Appendix 2 we formulate a more general sufficient condition concerning the functions $f_i^\al:\Omega\times \R^n \to \R^n$ such that the assumtion \reff{cass} is satisfied.
\\

In what follows we will prove Theorem \ref{main}. 
For that we use the objects of Theorem \ref{main}: The bounded Lipschitz domain $\Omega \subset \R^N$, the diffusion coefficients $a^{\al \beta}_{ij}(\ve,\cdot)$ with \reff{aass} and  \reff{mona}, 
%and with \reff{zwei} or \reff{gzwei},
the nonlinearities $f_i^\al$ with \reff{cass},
%the periodic correctors $v^{\beta}_j \in W^{1,2}_{\rm loc}(\R^N;\R^n)$, which are defined by the cell problems \reff{cell},
the homogenized diffusion coefficients
$\hat a^{\al \beta}_{ij}\in L^\infty(\Omega)$, which are
defined by \reff{H}, and the
nondegenerate
weak solution $u_0\in W_0^{1,2}(\Omega;\R^n)\cap C(\overline\Omega;\R^n)
$ to the homogenized boundary value problem~\reff{hombvp}.

\section{Abstract setting of the boundary value problems \reff{BVP}
and \reff{hombvp}}
\label{abset}
\setcounter{equation}{0}
\setcounter{theorem}{0}
As usual, the norm in the Sobolev space $W^{1,2}(\Omega;\R^n)$
is denoted by 
$$
\|u\|_{1,2}:=
\left(\sum_{\al=1}^n
\int_\Omega
\left(|u^\al(x)|^2+
\sum_{i=1}^N 
|\partial_{x_i}u^\al(x)|^2\right)
dx\right)^{1/2}.
$$
The subspace $W_0^{1,2}(\Omega;\R^n)$ 
of $W^{1,2}(\Omega;\R^n)$ 
is the closure with respect to this norm 
of the set 
of all $C^{\infty}$-maps $u:\Omega \to \R^n$ with compact support,
and $W^{-1,2}(\Omega;\R^n):=W_0^{1,2}(\Omega;\R^n)^*$ is the dual space to 
$W_0^{1,2}(\Omega;\R^n)$ with dual space norm
$$
\|\phi\|_{-1,2}:=\sup\left\{\langle \phi,\vp\rangle_{1,2}:\; \vp \in W_0^{1,2}(\Omega;\R^n),
\|\vp\|_{1,2}\le1\right\},
$$
where
$\langle \cdot,\cdot\rangle_{1,2}:
W^{-1,2}(\Omega;\R^n)\times
W_0^{1,2}(\Omega;\R^n)\to \R$
is the dual pairing. 

Let us  introduce linear bounded operators $\hat A,A_\ve:W^{1,2}(\Omega;\R^n)\to W^{-1,2}(\Omega;\R^n)$
(for $\ve>0$) 
and $D:L^{2}(\Omega;\R^{nN})\to W^{-1,2}(\Omega;\R^n)$
by
\bee
\label{ABdef}
\left.
\begin{array}{rcl}
\langle \hat A u,\vp\rangle_{1,2}&:=&
\displaystyle\int_\Omega \hat a_{ij}^{\al \beta}(x)
\partial_{x_j}u^\beta(x)\partial_{x_i}\vp^\al(x)dx,\\
\left\langle A_\ve u,\vp 
\right\rangle_{1,2}&:=&
\displaystyle\int_\Omega a_{ij}^{\al \beta}(\ve,x)
\partial_{x_j}u^\beta(x)\partial_{x_i}\vp^\al(x)dx,\\
\left\langle D g,\vp\right\rangle_{1,2}&:=&
\displaystyle\int_\Omega g_{i}^{\al}(x)
\partial_{x_i}\vp^\al(x)
dx,
\end{array}
\right\}
\mbox{ for all } \vp \in W_0^{1,2}(\Omega;\R^n).
\ee
Because of \reff{mona} and \reff{monAhat} and the Lax-Milgram lemma the operators $\hat A$ and $A_\ve$ are bijective from $W_0^{1,2}(\Omega,\R^n)$ onto
$W^{-1,2}(\Omega,\R^n)$, and
\bee
\label{invest}
\sup_{\ve>0}\|A_\ve^{-1}\|_{{\cal L}(W^{-1,2}(\Omega;\R^n);W_0^{1,2}(\Omega;\R^n))}
<\infty.
\ee

Further, we consider the superposition (or Nemycki)  operator 
%$F \in C^1(C(\overline\Omega;\R^n); L^{p_0}(\Omega;\R^{nN}))$ by
$F: C(\overline\Omega;\R^n) \to L^{p_0}(\Omega;\R^{nN})$ which is defined by
\bee
\label{fdef}
%F \in C^1(C(\overline\Omega;\R^n); L^{p_0}(\Omega;\R^{nN})):\;
\big(F(u)\big)(x):=[f_i^\al(x,u(x))]\mbox{ for almost all } x \in \Omega.
\ee
It is $C^1$-smooth because of assumption \reff{cass}.

By definition, a function $u \in W_0^{1,2}(\Omega;\R^n)
\cap C(\overline\Omega;\R^n)
$ is a weak solution to the boundary value problem~\reff{BVP} if and only if
\bee
\label{abBVP}
A_\ve u+DF(u)=0,
\ee
and  $u \in W_0^{1,2}(\Omega;\R^n)\cap C(\overline\Omega;\R^n)$ is a weak solution to the homogenized boundary value problem \reff{hombvp} if and only if $\hat Au+DF(u)=0$, in particular
\bee
\label{abhomBVP}
\hat Au_0+DF(u_0)=0.
\ee
Moreover, by assumption of Theorem \ref{main} we have that
\bee
\label{unique}
\left\{u \in W_0^{1,2}(\Omega;\R^n)
\cap C(\overline \Omega;\R^n):\;
(\hat A+DF'(u_0))u=0\right\}=\{0\}.
\ee

The following theorem, which 
follows from the H-convergence  assumption \reff{H},
is well-known in homogenization theory for linear elliptic equations and  systems 
%with $L^\infty$-coefficients 
(see, e.g.
\cite[Proposition 3.1]{Blanc},  \cite[Lemma 8.6]{Che}, \cite[Theorem 2.3.2]{Shen}). We prove it for the convenience of the reader. 
\begin{theorem}
\label{Shentheorem}
Let be given $u \in W^{1,2}(\Omega;\R^n)$ and $\phi\in W^{-1,2}(\Omega;\R^n)$ and sequences $\ve_1,\ve_2,\ldots>0$ and $u_1,u_2,\ldots \in 
W^{1,2}(\Omega,\R^n)$ such that
$\ve_l\to 0$ for $l \to \infty$ and
\begin{eqnarray}
\label{conv1}
&&u_l\rightharpoonup u \mbox{ for } l\to \infty \mbox{ weakly in }
W^{1,2}(\Omega;\R^n),\\
\label{conv2}
&&\|A_{\ve_l}u_l-\phi\|_{-1,2}\to 0
\mbox{ for } l\to \infty. 
\end{eqnarray}
Then $\hat A u=\phi$.
\end{theorem}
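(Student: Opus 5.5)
The plan is Tartar's energy method with oscillating test functions. The only information on $u_l$ near $\partial\Omega$ is weak convergence in $W^{1,2}$, since $u_l$ need not lie in $W_0^{1,2}$. I would therefore test only against $\vp\in C_c^\infty(\Omega;\R^n)$ and conclude $\langle \hat A u,\vp\rangle_{1,2}=\langle \phi,\vp\rangle_{1,2}$ by density of these $\vp$ in $W_0^{1,2}(\Omega;\R^n)$. Throughout, $A_\ve^*$ and $\hat A^*$ denote the operators with transposed tensors $a^{\beta\al}_{ji}(\ve,\cdot)$ and $\hat a^{\beta\al}_{ji}$. Both are bijective $W_0^{1,2}\to W^{-1,2}$ with inverses bounded uniformly in $\ve$, by \reff{mona}, \reff{monAhat} and \reff{invest}.

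\emph{Step 1 (adjoint convergence).} Fix $\vp$ and set $w_l:=(A_{\ve_l}^*)^{-1}\hat A^*\vp\in W_0^{1,2}(\Omega;\R^n)$. For every $\psi\in W^{-1,2}$ duality gives
$\langle \psi,w_l\rangle_{1,2}=\langle \hat A^*\vp,A_{\ve_l}^{-1}\psi\rangle_{1,2}$. By \reff{H} this tends to $\langle \hat A^*\vp,\hat A^{-1}\psi\rangle_{1,2}=\langle\psi,\vp\rangle_{1,2}$. Since $(w_l)$ is bounded, $w_l\rightharpoonup\vp$ weakly in $W_0^{1,2}$, and by Rellich $w_l\to\vp$ in $L^2$.

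\emph{Step 2 (adjoint fluxes; main obstacle).} Set $z_l:=[a^{\beta\al}_{ji}(\ve_l,\cdot)\partial_{x_j}w_l^\beta]$, which is bounded in $L^2$. I must show $z_l\rightharpoonup \hat z:=[\hat a^{\beta\al}_{ji}\partial_{x_j}\vp^\beta]$. Take a subsequence with $z_l\rightharpoonup z$. Fix a ball $B\Subset\Omega$, a cutoff $\zeta\in C_c^\infty(\Omega)$ equal to $1$ on $B$, and $v:=\zeta x_k e^\gamma$. Let $v_l:=A_{\ve_l}^{-1}\hat A v$. By \reff{H}, $v_l\rightharpoonup v$ and $a(\ve_l)\nabla v_l\rightharpoonup\hat a\nabla v$.
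The pairs $(a(\ve_l)\nabla v_l,\nabla w_l)$ and $(z_l,\nabla v_l)$ each consist of one field with fixed divergence ($\hat A v$, resp.\ $\hat A^*\vp$) and one gradient. The div--curl lemma therefore passes to the limit in both products in $\mathcal D'$. The two products are equal pointwise, since $a\nabla v_l\cdot\nabla w_l=z_l\cdot\nabla v_l$. Hence $\hat a\nabla v\cdot\nabla\vp=z\cdot\nabla v$ on $B$, where $\nabla v=e^\gamma\otimes e_k$. Varying $k,\gamma$ gives $z=\hat z$ on $B$, and therefore on $\Omega$. The limit is independent of the subsequence, so the whole sequence converges.

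\emph{Step 3 (conclusion).} Choose $\eta\in C_c^\infty(\Omega)$ with $\eta=1$ near $\operatorname{supp}\vp$. Since $\eta u_l\in W_0^{1,2}$,
$$
\int_\Omega a_{ij}^{\al\beta}(\ve_l,x)\partial_{x_j}(\eta u_l^\beta)\partial_{x_i}w_l^\al\,dx=\langle \hat A^*\vp,\eta u_l\rangle_{1,2}=\int_\Omega \hat a_{ij}^{\al\beta}\partial_{x_j}u_l^\beta\partial_{x_i}\vp^\al\,dx\to\langle\hat A u,\vp\rangle_{1,2}.
$$
I then expand the left side as
$\langle A_{\ve_l}u_l,\eta w_l\rangle_{1,2}-\int w_l\,a(\ve_l)\nabla u_l\cdot\nabla\eta+\int u_l\,z_l\cdot\nabla\eta$ and treat the three terms separately.

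The first term tends to $\langle\phi,\eta\vp\rangle_{1,2}=\langle\phi,\vp\rangle_{1,2}$, because \reff{conv2} gives strong convergence of $A_{\ve_l}u_l$ while $\eta w_l\rightharpoonup\eta\vp$ weakly.

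The second term tends to $0$, because $w_l\nabla\eta\to\vp\nabla\eta=0$ strongly in $L^2$ and the fluxes $a(\ve_l)\nabla u_l$ are bounded.

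The third term tends to $\int u\,\hat z\cdot\nabla\eta$ by Step 2, since $u_l\nabla\eta\to u\nabla\eta$ strongly in $L^2$; this limit vanishes because $\nabla\eta=0$ on $\operatorname{supp}\vp$.

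Together these give $\langle\hat Au,\vp\rangle_{1,2}=\langle\phi,\vp\rangle_{1,2}$. The delicate point is Step 2, since \reff{H} controls only the fluxes of $A_\ve$ and not those of its adjoint. I would handle it through the div--curl argument above, with the affine-type test functions $\zeta x_ke^\gamma$.
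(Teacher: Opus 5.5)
Your proof is correct, but it takes a genuinely different and considerably longer route than the paper.

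\textbf{The paper's route.} The paper sets $\bar u_l:=A_{\ve_l}^{-1}\phi$. It uses the uniform bound \reff{invest} together with \reff{conv2} to get $\|u_l-\bar u_l\|_{1,2}\to 0$. Then \reff{H} gives $\bar u_l\rightharpoonup \hat A^{-1}\phi$, hence $u=\hat A^{-1}\phi$. No adjoint problems, flux convergence or div--curl lemma are needed.

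\textbf{Where it relies on boundary values.} The key estimate there is $\|u_l-\bar u_l\|_{1,2}\le C\|A_{\ve_l}(u_l-\bar u_l)\|_{-1,2}$. It requires $u_l-\bar u_l\in W_0^{1,2}(\Omega;\R^n)$, i.e.\ $u_l\in W_0^{1,2}(\Omega;\R^n)$. The paper leaves this tacit, and it is harmless there: in both applications (Lemmas \ref{Rob} and \ref{barulemma}) one has $u_l\in U\subset W_0^{1,2}(\Omega;\R^n)$. For $u_l\in W^{1,2}(\Omega;\R^n)$ with nonzero boundary traces, the intermediate claim can fail, since $u_l-\bar u_l$ carries the trace of $u_l$. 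The conclusion $\hat Au=\phi$ nevertheless remains true.

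\textbf{What your argument does.} You build Tartar's oscillating test functions $w_l$ from the adjoint problems. You obtain H-convergence of the transposed tensors to the transposed limit from the div--curl lemma, using the affine test functions $\zeta x_k e^\gamma$. You then remove the boundary with the cutoff $\eta$. This proves exactly the local statement for arbitrary $u_l\in W^{1,2}(\Omega;\R^n)$, which is what the theorem literally asserts. I checked all three steps, including:
\begin{itemize}
\item the pointwise identity $a\nabla v_l\cdot\nabla w_l=z_l\cdot\nabla v_l$;
\item the identification $z^\gamma_k=\hat z^\gamma_k$ on $B$;
\item the vanishing of the two terms involving $\nabla\eta$.
\end{itemize}

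\textbf{Trade-off.} The paper's argument buys brevity under a boundary condition that its applications supply. Yours buys the full generality of the stated theorem, at the cost of the adjoint-convergence step and the div--curl lemma.
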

{\bf Proof } Define $\bar u \in W_0^{1,2}(\Omega;\R^n)$ and, for $l=1,2,\ldots$, $\bar u_l \in W_0^{1,2}(\Omega;\R^n)$ by
$
\hat A\bar u=A_{\ve_l}\bar u_l=\phi.
$
Because of assumption \reff{H} it follows that
\begin{eqnarray}
\label{conv3}
&&\bar u^\al_l\rightharpoonup \bar u^\al
\mbox{ for } l\to \infty
\mbox{ weakly in } W_0^{1,2}(\Omega),\\
\label{conv4}
&&a_{ij}^{\al \beta}(\ve_l,\cdot)
\partial_{x_j}\bar u^\beta_l\rightharpoonup  
\hat a_{ij}^{\al \beta}
\partial_{x_j}\bar u^\beta
\mbox{ for } l\to \infty
\mbox{ weakly in } L^{2}(\Omega).
\end{eqnarray}
Moreover, because of \reff{invest} and  \reff{conv2}  we have that $\|u_l-\bar u_l\|_{1,2}=
\|u_l-A_{\ve_l}^{-1}\phi\|_{1,2}\to 0$.
Hence, \reff{conv3} yields that $u_l\rightharpoonup \bar u$
for $l\to \infty$
weakly in  $W_0^{1,2}(\Omega;\R^n)$. Therefore \reff{conv1} implies that $\bar u=u$, and from \reff{conv4} follows that
$\phi=A_{\ve_l}\bar u_l\rightharpoonup
\hat A\bar u=\hat Au$
for $l \to \infty$ weakly in $W^{-1,2}(\Omega;\R^n)$. 
\qed

\section{An abstract result of implicit function theorem type}
\label{rift}
\setcounter{equation}{0}
\setcounter{theorem}{0}
In this section we formulate and prove a rather abstract result of implicit function theorem type (Theorem \ref{Banachtheorem} below)  for equation \reff{abBVP} which will be used later in order to prove Theorem \ref{main}.

\subsection{Auxiliary function spaces}
\label{spaces}
Let $U$ and $V$ be Banach spaces such that
\begin{eqnarray}
\label{Uass}
&&\mbox{$U$ is continuously embedded into $W_0^{1,2}(\Omega;\R^n)$ and compactly into $C(\overline\Omega;\R^n)$,}\\
\label{Vass}
&&\mbox{$V$ is continuously embedded into $W^{-1,2}(\Omega;\R^n)$}
\end{eqnarray}
%Concerning the restrictions of the operators $\hat A, A_\ve and $F$ on $U$ we suppose 
and that
\begin{eqnarray}
\label{hatAass}
&&\hat A,A_\ve \in \LL(U;V),
\;
D \in \LL(L^{p_0}(\Omega;\R^{nN});V),
\\
\label{Bass}
&&A_\ve \mbox{ are bijective from } U \mbox{ onto }  V,\\
\label{supass}
&&\sup_{\ve>0} \|A_\ve^{-1}\|_{{\cal L}(V;U)}<\infty.
%\label{Fass1}
%&&F\in C^1(C(\overline\Omega;\R^n);L^{p_0}(\Omega;\R^{nN}).
\end{eqnarray}
%Remark that 
%\reff{abBVP}, 
%\reff{hatAass} and \reff{Bass}
%yield that $u \in W_0^{1,2}(\Omega;\R^n) \cap C(\overline\Omega;\R^n)$ is a weak solution to \reff{BVP}, i.e. a solution to \reff{abBVP}, if and only if $u \in U$ and
%\bee
%\label{inverse1}
%u = -A_\ve^{-1}DF(u).
%\ee

Proofs of $\ve$-uniform coercivity estimates for singularly perturbed linear differential operators by assuming the contrary have been used for a long time. see, e.g. \cite[Proposition 4.1]{Casteras}, 
\cite[Lemma 3.4.1]{Cao}, 
\cite[Proposition 1.2]{del}, 
\cite[Proposition 5.1(ii)]{HSak}, 
\cite[Lemma 3.4]{Ishii}, 
\cite[Lemma 1.3]{Magnus1},
\cite[Lemma 1]{Taniguchi},
\cite[Proposition 7.1]{Wei}.
The following lemma is an adaption of that approach
to homogenization with H-converging families of diffusion tensors.

\begin{lemma} 
\label{Rob}
There exists $\ve_0>0$ such that
$$
\inf\left\{\|(A_\ve
+DF'(u_0))u\|_V:\;
\ve \in (0,\ve_0], u \in U, \|u\|_U
= 1\right\}>0.
$$
\end{lemma}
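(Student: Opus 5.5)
We argue by contradiction. If the claim fails, there are sequences $\ve_l\to 0$ and $u_l\in U$ with $\|u_l\|_U=1$ and
$$
\|(A_{\ve_l}+DF'(u_0))u_l\|_V\to 0 \quad\mbox{for } l\to\infty.
$$
By \reff{Uass} the sequence $(u_l)$ is bounded in $W_0^{1,2}(\Omega;\R^n)$ and precompact in $C(\overline\Omega;\R^n)$. Passing to a subsequence, we may assume $u_l\rightharpoonup u$ weakly in $W_0^{1,2}(\Omega;\R^n)$ and $u_l\to u$ in $C(\overline\Omega;\R^n)$. The limit then lies in $W_0^{1,2}(\Omega;\R^n)\cap C(\overline\Omega;\R^n)$.

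\textbf{Identifying the limit.} Since $F'(u_0)\in\LL(C(\overline\Omega;\R^n);L^{p_0}(\Omega;\R^{nN}))$ and, by \reff{hatAass}, $D\in\LL(L^{p_0};V)$, we get $DF'(u_0)u_l\to DF'(u_0)u$ in $V$, hence in $W^{-1,2}(\Omega;\R^n)$ by \reff{Vass}. Together with the contradiction assumption this gives
$$
\|A_{\ve_l}u_l+DF'(u_0)u\|_{-1,2}\to 0.
$$
Theorem \ref{Shentheorem}, applied with $\phi:=-DF'(u_0)u$, yields $\hat Au=-DF'(u_0)u$, i.e. $(\hat A+DF'(u_0))u=0$. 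By \reff{unique} this forces $u=0$.

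\textbf{Deriving the contradiction.} We now have $u_l\to 0$ in $C(\overline\Omega;\R^n)$, so $DF'(u_0)u_l\to 0$ in $V$. Consequently $\|A_{\ve_l}u_l\|_V\le \|(A_{\ve_l}+DF'(u_0))u_l\|_V+\|DF'(u_0)u_l\|_V\to 0$. The uniform bound \reff{supass} then gives
$$
1=\|u_l\|_U\le \sup_{\ve>0}\|A_\ve^{-1}\|_{\LL(V;U)}\,\|A_{\ve_l}u_l\|_V\to 0,
$$
which is a contradiction.

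The one delicate point is the passage to the limit in the first step. It requires the strong convergence of $DF'(u_0)u_l$ in $W^{-1,2}$, which comes from the compact embedding of $U$ into $C(\overline\Omega;\R^n)$, so that the hypotheses of Theorem \ref{Shentheorem} are met. Everything else is norm bookkeeping using \reff{Uass}--\reff{supass}.
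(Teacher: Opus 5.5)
Your proof is correct and follows the paper's argument essentially step by step: the contradiction sequence, extraction of a subsequence converging weakly in $W_0^{1,2}(\Omega;\R^n)$ and uniformly, identification of the limit via Theorem \ref{Shentheorem} and \reff{unique}, and the final contradiction with \reff{supass}. The one step you state without justification is that the weak $W^{1,2}$-limit and the uniform limit coincide; the paper checks this explicitly, and it follows at once because both convergences imply weak convergence in $L^2(\Omega;\R^n)$, where limits are unique.
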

{\bf Proof }  Suppose the contrary. Then there exist sequences $\ve_1,\ve_2,\ldots>0$ and $u_1,u_2,\ldots \in U$ such that 
\bee
\label{ABconv}
\ve_l+\|(A_{\ve_l}
+DF'(u_0))u_l\|_V\to 0
\mbox{ for } l \to \infty,
\ee
but
\bee
\label{bound}
\|u_l\|_U= 1 \mbox{ for all } l.
\ee
Because of \reff{Uass} and \reff{bound}
without loss of generality we may assume that there exist $v_1\in W_0^{1,2}(\Omega;\R^n)$ and $v_2\in C(\overline\Omega;\R^n)$
with
$u_l\rightharpoonup v_1$
weakly in $W_0^{1,2}(\Omega;\R^n)$
and 
$\|u_l-v_2\|_\infty\to 0$ for $l \to \infty$.
For $\vp^\al(x):=\mbox{sgn} (v^\al_1(x)-v^\al_2(x))$ follows that
\begin{eqnarray*}
0&=&\lim_{l \to \infty}\int_\Omega(u^\al_l-v^\al_1)\vp^\al dx\\
&=&\lim_{l \to \infty}\int_\Omega(u_l^\al-v^\al_2)\vp^\al dx+\sum_{\al=1}^N\int_\Omega|v^\al_1- v^\al_2|dx=\sum_{\al=1}^N\int_\Omega|v^\al_1-v^\al_2|dx.
\end{eqnarray*}
Therefore
$v_1=v_2=:v\in W_0^{1,2}(\Omega;\R^n)\cap C(\overline\Omega;\R^n)$ and
\begin{eqnarray}
\label{vweak}
&&\mbox{$u_l\rightharpoonup v$
weakly in $W_0^{1,2}(\Omega;\R^n)$
for $l \to \infty$,}\\
\label{vstrong}
&&\mbox{$\|u_l-v\|_\infty\to 0$
for $l \to \infty$}.
\end{eqnarray}
Moreover, \reff{Uass} and \reff{vstrong} yield that
$
%\label{Fconv}
\|DF'(u_0)(u_l-v)\|_V\to 0$  for $l \to \infty$.
Hence, \reff{Vass} and \reff{ABconv} imply that
\bee
\label{ABconv1}
\|A_{\ve_l}u_l+DF'(u_0)v\|_{-1,2}\to 0
\mbox { for } l \to \infty.
\ee
Because of \reff{vweak}, \reff{ABconv1} and Theorem \ref{Shentheorem} (with $\phi=-DF'(u_0)v$) we get $(\hat A+DF'(u_0))v=0$, and \reff{unique} yields that $v=0$.
Therefore \reff{ABconv} and \reff{vstrong} yield that
$\|A_{\ve_l}u_l\|_V\to 0$
for $l \to \infty$.
But this contradicts to \reff{supass} and \reff{bound}.
\qed\\

The linear operators $A_\ve$ are isomorphisms from $U$ onto $V$ (cf. \reff{Bass}), and the linear operator $DF'(u_0)$ is compact 
from $U$ into $V$ (cf. \reff{fdef} and  \reff{Vass}). Therefore the linear operators $A_\ve+DF'(u_0)$ are Fredholm of index zero from $U$ onto $V$, and Lemma \ref{Rob} yields that
%for $\ve \in (0,\ve_0]$
\bee
\label{ABFiso}
\left.
\begin{array}{l}
A_\ve+DF'(u_0)
%\in {\rm Iso}(U;V) 
\mbox{ are bijective from $U$ onto $V$
for $\ve \in (0,\ve_0]$,} \\
\mbox{and $\sup_{\ve \in (0,\ve_0]}
\|(A_\ve+DF'(u_0))^{-1}\|_{\LL(V;U)}<\infty$.}
\end{array}
\right\}
\ee
Therefore, if $\ve \in (0,\ve_0]$, then  $u \in W_0^{1,2}(\Omega;\R^n) \cap C(\overline\Omega;\R^n)$ is a weak solution to \reff{BVP}, i.e. to \reff{abBVP}, 
if and only if  $u \in U$
%(cf. \reff{inverse1}) 
and 
\bee
\label{fpp}
u=G_\ve(u),
\ee
where the maps $G_\ve:U \to U$
are defined by 
\begin{eqnarray*}
G_\ve(u)&:=&u-(A_\ve+DF'(u_0))^{-1}
(A_\ve u+DF(u))\\
&=&(A_\ve+DF'(u_0))^{-1}D(F'(u_0)u-
F(u)).
\end{eqnarray*}
Unfortunately, $u_0 \notin U$
or $\|A_\ve
u_0+DF(u_0)\|_{V}
 \not\to 0$ for $\ve \to 0$,
in general.
Therefore $u_0$ is not an appropriate starting element for the fixed point iteration $u_{l+1}=G_\ve(u_l)$, in general.
In order to apply Banach's fixed point theorem to \reff{fpp} we use as starting elements  the 
$\ve$-depending approximate solutions
$\uu \in U$ to \reff{abBVP} which are 
defined in \reff{barudef1} below.
%close to $u_0$ in $C(\overline\Omega;\R^n)$.

\subsection{Approximate solutions}
\label{approximate}
For $\ve>0$ we define
$\uu \in U$ by
\bee
\label{barudef1}
\uu:=-A_\ve^{-1}DF(u_0).
\ee

\begin{lemma}
\label{barulemma}
We have $\|\uu-u_0\|_\infty+
\|A_\ve
\bar u_\ve+DF(\bar u_\ve)\|_{V}
 \to 0$ for $\ve \to 0$.
\end{lemma}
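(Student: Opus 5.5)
We prove the two convergences separately. The second one reduces to the first, so the main work is showing $\|\uu-u_0\|_\infty\to 0$.

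\textbf{Uniform convergence $\|\uu-u_0\|_\infty\to 0$.} We argue by contradiction, in the spirit of Lemma \ref{Rob}. Suppose there are $\ve_l\to 0$ and $c>0$ with $\|\bar u_{\ve_l}-u_0\|_\infty\ge c$ for all $l$.

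First we bound $\bar u_{\ve_l}$ in $U$. Since $F(u_0)\in L^{p_0}(\Omega;\R^{nN})$ and $D\in\LL(L^{p_0};V)$ by \reff{hatAass}, the definition \reff{barudef1} and \reff{supass} give
$$\|\bar u_{\ve_l}\|_U\le \sup_{\ve>0}\|A_\ve^{-1}\|_{\LL(V;U)}\,\|DF(u_0)\|_V<\infty.$$

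Next we extract limits. By \reff{Uass}, after passing to a subsequence, $\bar u_{\ve_l}\rightharpoonup v_1$ weakly in $W_0^{1,2}(\Omega;\R^n)$ and $\bar u_{\ve_l}\to v_2$ in $C(\overline\Omega;\R^n)$. The same sign-function argument as in the proof of Lemma \ref{Rob} shows $v_1=v_2=:v$.

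Then we identify the limit. By construction $A_{\ve_l}\bar u_{\ve_l}=-DF(u_0)$ exactly, so Theorem \ref{Shentheorem} applies with $\phi=-DF(u_0)$ and gives $\hat Av=-DF(u_0)$. On the other hand, \reff{abhomBVP} gives $\hat A u_0=-DF(u_0)$. Since $\hat A$ is injective on $W_0^{1,2}(\Omega;\R^n)$ by Lax–Milgram and \reff{monAhat}, we get $v=u_0$.

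This yields $\|\bar u_{\ve_l}-u_0\|_\infty\to 0$, contradicting the choice of $c$. Note that no rate is obtained, which matches the remark in the introduction.

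\textbf{The residual.} Using $A_\ve\uu=-DF(u_0)$, we have
$$A_\ve\uu+DF(\uu)=D\bigl(F(\uu)-F(u_0)\bigr),$$
so
$$\|A_\ve\uu+DF(\uu)\|_V\le\|D\|_{\LL(L^{p_0};V)}\,\|F(\uu)-F(u_0)\|_{L^{p_0}}.$$
The map $F$ is continuous (indeed $C^1$) from $C(\overline\Omega;\R^n)$ into $L^{p_0}$, and $\uu\to u_0$ in $C(\overline\Omega;\R^n)$ by the first part. Hence the right-hand side tends to zero.

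\textbf{Main obstacle.} The delicate point is that $u_0$ need not belong to $U$, so we cannot compare $\uu$ with $u_0$ in $U$-norm. We avoid this by using only the uniform $U$-bound on $\uu$, compactness into $C(\overline\Omega;\R^n)$, and the identification of the limit through H-convergence (Theorem \ref{Shentheorem}) together with uniqueness for $\hat A$. The subsequence argument has to be run from an arbitrary sequence $\ve_l\to 0$, as done above, in order to conclude convergence of the whole family.
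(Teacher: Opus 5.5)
Your proposal is correct and follows essentially the same route as the paper. The common steps are: the uniform $U$-bound from \reff{supass}, compactness together with the sign-function identification of the weak and uniform limits, Theorem \ref{Shentheorem} with $\phi=-DF(u_0)$ combined with injectivity of $\hat A$ (from \reff{monAhat}) to get $v=u_0$, and finally the residual identity $A_\ve\uu+DF(\uu)=D(F(\uu)-F(u_0))$ together with continuity of $F$. The only difference is cosmetic: you phrase the subsequence argument as a proof by contradiction, whereas the paper uses the equivalent ``limit is independent of the subsequence'' formulation.
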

{\bf Proof } Take a sequence $\ve_1,\ve_2,\ldots>0$ with $\ve_l \to 0$
for $l \to \infty$.
Because of \reff{supass} and \reff{barudef1}
the sequence $\bar u_{\ve_1},\bar u_{\ve_2},\ldots$ is bounded in $U$.
Hence, as in the proof of Lemma \ref{Rob} one shows that
there exist $v \in W_0^{1,2}(\Omega)\cap C(\overline\Omega)$ and a subsequence
$\bar u_{\ve'_1},\bar u_{\ve'_2},\ldots$
such that 
$$
\bar u_{\ve'_l}\rightharpoonup v \mbox{ weakly in } W_0^{1,2}(\Omega)
\mbox{ and }\|u_{\ve'_l}-v\|_\infty
\to 0 \mbox{ for } l \to \infty.
$$
But \reff{barudef1} yields that $A_{\ve'_l}u_{\ve'_l}+DF(u_0)=0$.
Therefore
we can apply Theorem \ref{Shentheorem} again, which yields
that
$
\hat Av=-DF(u_0)=\hat Au_0.
$
Here we used \reff{abhomBVP}.
Because of \reff{monAhat} it follows that $v=u_0$
i.e. the $L^\infty$-strong limit $u_0$ 
%in $L^\infty(\Omega;\R^n)$ 
of the subsequence does not depend on the choice of the subsequence, i.e.
$\|u_{\ve_l}-u_0\|_\infty \to 0$
for $l \to \infty$.
Therefore we get finally that
$$
\|A_{\ve_l}
\bar u_{\ve_l}+DF(\bar u_{\ve_l})\|_{V}=
\|D(F(\bar u_{\ve_l})-F(u_0))\|_{V} \to 0 \mbox{ for } l \to \infty, 
$$
as needed.
\qed\\

Lemma \ref{barulemma} claims that the functions 
$\bar u_\ve$ satisfy equation 
\reff{abBVP} approximately for $\ve \to 0$, i.e. they 
are  "approximate solutions" 
to \reff{abBVP} with $\ve \approx 0$. 
But do they approximate for $\ve \to 0$
exact solutions to \reff{abBVP}, and if yes, in which sense?

\subsection{Application of Banach's fixed point theorem}
\label{Banach}
Roughly speaking, the Theorem \ref{Banachtheorem} below claims the following: 
%If one is able to construct a family $\uu \in U$ which for $\ve \to 0$ solves \reff{abBVP} approximately (such families often are called families of approximate 
%solutions to \reff{abBVP}) and if $\|\uu-u_0\|_\infty \to 0$ for $\ve \to 0$,
%then for $\ve \approx 0$ 
For $\ve \approx 0$ there
exists exactly one solution $u=u_\ve$ to \reff{abBVP} with 
$\|u-u_0\|_\infty \approx 0$, and
$\|u_\ve-u_0\|_\infty$ can be estimated 
by $\|\uu-u_0\|_\infty$ and by the discrepancy $\|A_\ve \uu+DF(\uu)\|_V$ of the approximate solutions
$\uu$ to \reff{abBVP}.

%For $r>0$ and $\ve>0$ denote ${\cal B}_r(\bar u_\ve):=\{u \in U: \|u-\bar u_\ve\|_U \le r\}$.

\begin{theorem}
\label{Banachtheorem}
There exist $\ve_1 \in (0,\ve_0]$ and $\delta>0$ such that for all $\ve \in (0,\ve_1]$ there exists exactly one fixed point $u=u_\ve$ of $G_\ve$ with $\|u-u_0\|_\infty \le \delta$.  Moreover,
\bee
\label{fixest}
\|u_\ve-u_0\|_\infty=O(\|\uu-u_0\|_\infty+\|A_\ve\bar u_\ve+DF(\bar u_\ve)\|_V) \mbox{ for } \ve \to 0.
\ee
\end{theorem}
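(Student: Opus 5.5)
The plan is to apply Banach's fixed point theorem to $G_\ve$ on a closed ball in $U$ around the approximate solution $\uu$, in the style of the classical implicit function theorem. Local uniqueness in the $L^\infty$-sense will then follow from a separate contraction estimate.

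Set $c:=\sup_{\ve\in(0,\ve_0]}\|(A_\ve+DF'(u_0))^{-1}\|_{\LL(V;U)}<\infty$ (cf. \reff{ABFiso}). For $u,v\in U$ we have
$$
G_\ve(u)-G_\ve(v)=(A_\ve+DF'(u_0))^{-1}D\Big(F'(u_0)(u-v)-\big(F(u)-F(v)\big)\Big),
$$
and
$$
F(u)-F(v)-F'(u_0)(u-v)=\int_0^1\big(F'(v+s(u-v))-F'(u_0)\big)(u-v)\,ds .
$$
Since $F\in C^1(C(\overline\Omega;\R^n);L^{p_0})$ and $D\in\LL(L^{p_0};V)$, there is a $\delta>0$ such that
$$
\|G_\ve(u)-G_\ve(v)\|_U\le \tfrac12\|u-v\|_\infty
\quad\mbox{whenever } \|u-u_0\|_\infty,\ \|v-u_0\|_\infty\le 2\delta .
$$
Because $U\hookrightarrow C(\overline\Omega;\R^n)$ continuously with some constant $k$, this gives $\|G_\ve(u)-G_\ve(v)\|_U\le \frac{k}{2}\|u-v\|_U$. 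That is not yet a contraction, so I will shrink $\delta$ further so that the Lipschitz factor becomes $\frac{1}{2k}$ in the $\infty$-norm. This yields contraction constants $\le\frac12$ in both the $U$-norm and the $\infty$-norm, since $\|G_\ve u-G_\ve v\|_\infty\le k\|G_\ve u-G_\ve v\|_U$.

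For the self-mapping property, write $\rho_\ve:=\|\uu-u_0\|_\infty+\|A_\ve\uu+DF(\uu)\|_V$, which tends to $0$ by Lemma \ref{barulemma}. Then
$$
\|G_\ve(\uu)-\uu\|_U=\|(A_\ve+DF'(u_0))^{-1}(A_\ve\uu+DF(\uu))\|_U\le c\,\rho_\ve .
$$
Consider the ball $B_\ve:=\{u\in U:\|u-\uu\|_U\le r\}$ with $r:=\delta/k$. Every $u\in B_\ve$ satisfies $\|u-u_0\|_\infty\le \delta+\rho_\ve\le 2\delta$ for small $\ve$. Hence, for $u\in B_\ve$,
$$
\|G_\ve u-\uu\|_U\le \tfrac12 r+c\rho_\ve\le r
$$
once $\ve\le\ve_1$ is small. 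Banach's theorem then gives a fixed point $u_\ve\in B_\ve$. The fixed point estimate
$$
\|u_\ve-\uu\|_U\le 2\|G_\ve\uu-\uu\|_U\le 2c\rho_\ve
$$
together with $\|u_\ve-u_0\|_\infty\le k\|u_\ve-\uu\|_U+\|\uu-u_0\|_\infty$ yields \reff{fixest}. Since $\rho_\ve\to0$, this fixed point also satisfies $\|u_\ve-u_0\|_\infty\le\delta$ for $\ve$ small.

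For uniqueness, let $u$ be any fixed point with $\|u-u_0\|_\infty\le\delta$. Fixed points of $G_\ve$ lie in $U$ automatically, since $G_\ve$ maps into $U$. Both $u$ and $u_\ve$ lie in the $2\delta$-region, so
$$
\|u-u_\ve\|_\infty=\|G_\ve u-G_\ve u_\ve\|_\infty\le\tfrac12\|u-u_\ve\|_\infty ,
$$
which forces $u=u_\ve$.

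The main obstacle is the mismatch of norms. Smallness of the nonlinear remainder is only available in $C(\overline\Omega)$, not in $U$, and uniqueness is claimed in the $\infty$-norm rather than in $U$. The key point is that the difference estimate bounds the $U$-norm of $G_\ve u-G_\ve v$ by a small multiple of $\|u-v\|_\infty$. This single estimate handles both the contraction in $U$ and uniqueness in $L^\infty$.
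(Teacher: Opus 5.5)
Your proof is correct and follows the paper's structure: Banach's fixed point theorem on a $U$-ball around $\uu$, the self-mapping property via Lemma~\ref{barulemma} and \reff{ABFiso}, and the bound $\|u_\ve-\uu\|_U\le 2\|G_\ve(\uu)-\uu\|_U$, which yields \reff{fixest}. The only deviation is the uniqueness step: the paper expands $A_\ve u+DF(u)$ around $\uu$ to show that any fixed point with small $\|u-u_0\|_\infty$ already lies in ${\cal B}_{r_0}(\uu)$, whereas you sharpen the difference estimate to $\|G_\ve(u)-G_\ve(v)\|_U\le\frac{1}{2k}\|u-v\|_\infty$ and get a contraction in the $\infty$-norm directly, which is a slightly more economical route to the same conclusion.
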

{\bf Proof }
For $r>0$ and $\ve>0$ denote ${\cal B}_r(\bar u_\ve):=\{u \in U: \|u-\bar u_\ve\|_U \le r\}$.

First, let us show that $G_\ve$ is strictly
contractive with respect to $\|\cdot\|_U$
on ${\cal B}_r(\bar u_\ve)$
if $r$ and $\ve$ are sufficiently small. Take $u_1,u_2 \in U$. Because of \reff{ABFiso} we have
\begin{eqnarray*}
&&\|G_\ve(u_1)-G_\ve(u_1)\|_U
=\left\|(A_\ve+DF'(u_0))^{-1}
D\Big(F'(u_0)(u_1-u_2)-
F(u_1)-F(u_2)\Big)\right\|_U\
\\
&&=\left\|(A_\ve+DF'(u_0))^{-1}
\int_0^1D\Big(F'(u_0)-
F'(su_1-(1-s)u_2)\Big)(u_1-u_2)ds\right\|_U\\
&&\le \mbox{const} \max_{0\le s \le 1}\|F'(u_0)-
F'(su_1-(1-s)u_2)\|_{{\cal L}(C(\overline\Omega;\R^n);L^{p_0}(\Omega;\R^{nN})}\|u_1-u_2\|_U,
\end{eqnarray*}
where the constant does not depend on $\ve$, $u_1$ and $u_2$.
Moreover, if $u_1,u_2 \in{\cal B}_r(\bar u_\ve)$, then 
$$
\|u_0-su_1+(1-s)u_2\|_\infty\le 
\|u_0-\uu\|_\infty+
r  \to 0 \mbox{ for } r,\ve \to 0
\mbox{ uniformly wrt } s \in [0,1].
$$
Therefore the continuity of $F'$ 
from $C(\overline\Omega;\R^n)$
into
${\cal L}(C(\overline\Omega;\R^n);L^{p_0}(\Omega;\R^{nN})$
yields that there exist $r_0>0$ and $\ve_1 \in (0,\ve_0]$ such that for all $\ve \in (0,\ve_1]$
$$
\|G_\ve(u_1)-G_\ve(u_1)\|_U
\le \frac{1}{2}\|u_1-u_2\|_U \mbox{ for } u_1,u_2 \in {\cal B}_{r_0}(\bar u_\ve).
$$

Second, we show that 
$G_\ve$ maps 
${\cal B}_{r_0}(\bar u_\ve)$ into itself for all $\ve \in (0,\ve_1]$
if $\ve_1$ is taken sufficiently small: Indeed, for $u \in {\cal B}_{r_0}(\bar u_\ve)$ we have
\begin{eqnarray*}
&&\|G_\ve(u)-\bar u_\ve\|_U
\le \|G_\ve(u)-G_\ve(\bar u_\ve)\|_U+\|G_\ve(\bar u_\ve)-\bar u_\ve
\|_U\\
&&\le \frac{r_0}{2}+\|(A_\ve+
DF'(u_0))^{-1}(A_\ve\bar u_\ve+DF(\bar u_\ve))\|_U\\
&&
\le \frac{r_0}{2}+\mbox{const}\|A_\ve \uu+DF(\bar u_\ve)\|_V,
\end{eqnarray*}
where the constant doe not depend on $\ve$ and $u$, and Lemma \ref{barulemma} yields the claim.

Now, Banach's fixed point theorem yields that for all $\ve \in (0,\ve_1]$ there exists exactly one fixed point $u=u_\ve$ of $G_\ve$ in ${\cal B}_{r_0}(\bar u_\ve)$.

Let us prove \reff{fixest}:
As above we have 
\begin{eqnarray*}
&&\|u_\ve-\bar u_\ve\|_U\le \|G_\ve(u_\ve)-G_\ve(\bar u_\ve)\|_U+\|G_\ve(\bar u_\ve)-\bar u_\ve
\|_U\\
&&\le
\frac{1}{2}\|u_\ve-\bar u_\ve\|_U+
\mbox{const}\|A_\ve\uu 
+DF(\bar u_\ve)\|_V,
\end{eqnarray*}
where the constant does not depend on $\ve$.
Therefore
$$
\|u_\ve-\bar u_\ve\|_U=O(\|A_\ve
\uu
+DF(\bar u_\ve)\|_V) \mbox{ for } \ve \to 0.
$$
Hence, \reff{Uass} and 
$\|u_\ve-u_0\|_\infty
\le \|u_\ve-\uu\|_\infty
+\|u_\ve-\bar u_\ve\|_\infty
$
yield the claim.

Finally, let us show that the fixed point of $G_\ve$ is unique not only for $\ve \approx 0$ and $\|u-\uu\|_U \approx 0$, but even for $\ve \approx 0$ and $\|u-u_0\|_\infty \approx 0$:
Take $\ve \in (0,\ve_1]$ and $u \in U$ with $G_\ve(u)=u$. Then
\begin{eqnarray*}
0&=&A_\ve u+DF(u)\\
&=&A_\ve \bar u_\ve+DF(\bar u_\ve)+(A_\ve+DF'(u_0)) (u-\bar u_\ve)\\
&&+\int_0^1D\Big(F'(su+(1-s)\bar u_\ve)-F'(u_0)\Big)(u-\bar u_\ve)ds.
\end{eqnarray*}
Therefore \reff{ABFiso} yields that
\begin{eqnarray*}
&&\|u-\bar u_\ve\|_U\\
&&\le \mbox{const}\left\|
A_\ve \bar u_\ve+DF(\bar u_\ve)+
\int_0^1D\Big(F'(su+(1-s)\bar u_\ve)-F'(u_0)\Big)(u-\bar u_\ve)ds\right\|_V\\
&&\le \mbox{const}\Big(\|
A_\ve \bar u_\ve+DF(\bar u_\ve)\|_V
+\|u-\uu\|_\infty\Big),
%&&\;\;\;\,+
%\max_{0\le s \le 1}\|
%F'(su-(1-s))-F'(u_0)\|_{{\cal L}(C(\overline\Omega;\R^n);L^{p_0}(\Omega,\R^{nN})}\|u-\uu\|_\infty,
\end{eqnarray*}
where the constants do not depend on $\ve$ and $u$. Hence, $\|u-\bar u_\ve\|_U$ is small if $\ve$ and $\|u-\uu\|_\infty$ are small
(because of Lemma~\ref{barulemma}). 
But $\ve$ and $\|u-\uu\|_\infty$ are small if $\ve$ and $\|u-u_0\|_\infty$ are small (again because of Lemma~\ref{barulemma}).
%(because of \reff{fixest}).
Therefore the fixed point $u$ is unique if $\ve$ and $\|u-u_0\|_\infty$ are small.
%Here we used Lemma~\ref{barulemma} and \reff{fixest}.
\qed

\begin{remark}
For other applications of Banach or Newton-Kantorowich fixed point iterations near approximate solutions to semilinear elliptic PDEs see \cite[Theorem 3.1]{Breden}
and \cite{Cadiot}.
\end{remark}

\section{Choice of the Banach spaces \boldmath$U$\unboldmath\, 
and \boldmath$V$\unboldmath \, in the case \reff{zwei}}
\label{choicezwei}
\setcounter{equation}{0}
\setcounter{theorem}{0}
As usual, the norm in the Sobolev space $W^{1,p}(\Omega;\R^n)$
%(with $p\in [1,\infty)$) 
is denoted by 
$$
\|u\|_{1,p}:=
\left(\sum_{\al=1}^n
\int_\Omega
\left(|u^\al(x)|^p+
\sum_{i=1}^N 
|\partial_{x_i}u^\al(x)|^p\right)
dx\right)^{1/p}.
$$
The subspace $W_0^{1,p}(\Omega;\R^n)$ 
of $W^{1,p}(\Omega;\R^n)$ 
is the closure with respect to this norm 
of the set 
of all $C^{\infty}$-maps $u:\Omega \to \R^n$ with compact support,
and $W^{-1,p}(\Omega;\R^n):=W_0^{1,p'}(\Omega;\R^n)^*$ is the dual space to 
$W_0^{1,p'}(\Omega;\R^n)$ with 
$1/p+1/p'=1$
and with dual space norm
$$
\|\phi\|_{-1,p}:=\sup\left\{\langle \phi,\vp\rangle_{1,p'}:\; \vp \in W_0^{1,p'}(\Omega;\R^n),
\|\vp\|_{1,p'}\le1\right\},
$$
where
$\langle \cdot,\cdot\rangle_{1,p'}:
W^{-1,p}(\Omega;\R^n)\times
W_0^{1,p'}(\Omega;\R^n)\to \R$
is the dual pairing. 

In order to prove Theorem \ref{main} in the case \reff{zwei} we suppose
\reff{Omass}--\reff{H} and that $N=2$, and we use the results of Section \ref{rift} with
\bee
\label{UVdef1}
U:=W_0^{1,p}(\Omega;\R^n)
\mbox{ and } V:=W^{-1,p}(\Omega;\R^n),
\ee
where $p\in (2,p_0]$ will be choosen below.
We have to verify the conditions \reff{Uass}--\reff{supass} for this choice.

Conditions \reff{Uass}--\reff{hatAass}
follow from the H\"older inequality, asumption $p>N=2$ and the Rellich embedding theorem.
%Condition \reff{Fass1} is satisfied because 
%the map $F$ is the superposition of the $C^1$-Nemycki operator
%$u \in L^\infty(\Omega;\R^n)\mapsto
%[f_i^\al(\cdot,u(\cdot))]
%\in L^\infty(\Omega;\R^{nN})$
%and of 
%the linear bounded operator
%$D$, which is defined in \reff{ABdef}, is bounded from 
%$L^{p}(\Omega;\R^{nN})$ into
%$W^{-1,p}(\Omega;\R^n)$ for any $p \ge 2$ and because 
%of assumption \reff{cass} and
%of the continuous embedding $U \hookrightarrow C(\overline\Omega;\R^n)$.
%the superposition operator $F$, which is defined in \reff{fdef}, is $C^1$-smooth from 
%$U$ into $L^{p_0}(\Omega;\R^{nN})$.

Further, conditions \reff{Bass} and \reff{supass} follow from assumption  \reff{mona}   
and Theorem \ref{Maxreg1} below, which imply the following:
%K. Gr\"oger's maximal regularity results for elliptic systems with non-smooth data \cite[Theorems 1 and 2 and Remark 14]{G}, which  implies the following:
\begin{theorem}
\label{maxreg1}
There exists $p_1>2$  
such that
for all $\ve>0$ and all $p\in[2,p_1]$
the linear operators $A_\ve$
are bijective from $W_0^{1,p}(\Omega;\R^n)$ onto $W^{-1,p}(\Omega;\R^n)$. 
Moreover,
\bee
\label{Ainvert1}
\sup_{\ve>0}\|A_\ve^{-1}\|_{{\cal L}(W^{-1,p}(\Omega;\R^n);W_0^{1,p}(\Omega;\R^n))}
<\infty
\mbox{ for all } p \in [2,p_1].
\ee
\end{theorem}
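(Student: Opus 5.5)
The plan is to derive Theorem \ref{maxreg1} from Gröger's maximal regularity result, which the paper quotes as Theorem \ref{Maxreg1} in Appendix 1. The essential point is that Gröger's integrability exponent, and his bound on the inverse, depend on the coefficients only through the ellipticity constant in \reff{mona} and the $L^\infty$ bound in \reff{aass}, and not on the coefficients themselves. Because both quantities are uniform in $\ve>0$, every conclusion will be uniform in $\ve$.

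\textbf{Step 1 (reduction to a perturbation of a fixed isomorphism).} Let $\lambda>0$ be the infimum in \reff{mona} and $M$ the supremum in \reff{aass}. Since $\Omega$ is Lipschitz, it is regular in the sense of \cite{G}. Hence for some $q_0>2$ the operator $-\Delta$, acting componentwise, is an isomorphism from $W_0^{1,p}(\Omega;\R^n)$ onto $W^{-1,p}(\Omega;\R^n)$ for all $p\in[2,q_0]$. By Riesz--Thorin interpolation between $p=2$ and $p=q_0$, the norm $K_p$ of its inverse satisfies $K_p\to 1$ as $p\to2$, provided the norms are normalized suitably (gradient norm on $W_0^{1,p}$ and the corresponding dual norm). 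Now write
$$
A_\ve=\frac{M'}{1}\Big(-\Delta-(-\Delta-\tfrac{1}{M'}A_\ve)\Big),
$$
where $M'$ depends only on $\lambda$ and $M$. With this choice, pointwise Legendre ellipticity gives that the matrix $I-\frac1{M'}[a^{\al\beta}_{ij}(\ve,x)]$ has operator norm at most $k:=\sqrt{1-\lambda^2/M^2}<1$, after enlarging $M$ if needed. This bound is uniform in $\ve$ and $x$. Consequently the perturbation has norm at most $k$ from $W_0^{1,p}$ into $W^{-1,p}$, measured in the gradient norms above.

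\textbf{Step 2 (Neumann series).} Choose $p_1\in(2,q_0]$ so small that $kK_p<1$ for all $p\in[2,p_1]$. A Neumann series then shows that $A_\ve$ is bijective from $W_0^{1,p}$ onto $W^{-1,p}$, with
$$
\|A_\ve^{-1}\|\le \frac{K_p}{M'(1-kK_p)},
$$
a bound independent of $\ve$. This gives \reff{Ainvert1}, since on $W_0^{1,p}$ the gradient norm is equivalent to $\|\cdot\|_{1,p}$ by Poincaré.

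\textbf{Main obstacle.} The subtle point is the pointwise contraction estimate in Step 1. It requires strong (pointwise) ellipticity of the full system matrix, not only Gårding-type coercivity, and assumption \reff{mona} provides exactly the Legendre condition needed. The second point is the continuity of $p\mapsto K_p$ near $2$, which comes from interpolation. Both are precisely the content of Gröger's theorem, so in the write-up I would invoke Theorem \ref{Maxreg1} directly and note that its constants depend only on $\Omega$, $\lambda$ and $M$.
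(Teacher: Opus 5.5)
Your proposal is correct and follows the paper's argument. By \reff{aass} and \reff{mona} all the tensors $[a^{\al\beta}_{ij}(\ve,\cdot)]$ lie in one class ${\cal M}_r$, so Theorem~\ref{Maxreg1} applies with $p_1:=p_r$ and gives a bound uniform in $\ve$; your Steps 1--2 (interpolation giving $K_p\to 1$ as $p\to 2$, plus the Neumann series around $-\Delta$ using the pointwise contraction $\sqrt{1-\lambda^2/M^2}$) correctly sketch Gr\"oger's proof of that theorem, which the paper cites rather than reproduces.
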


Hence, Theoren \ref{main} in the case \reff{zwei} follows from Theorem \ref{Banach} with the choice \reff{UVdef1} with arbitrary
$$
2<p \le \min\{p_0,p_1\}.
$$

\section{Choice of the Banach spaces \boldmath$U$\unboldmath\, 
and \boldmath$V$\unboldmath \, in the case \reff{gzwei}}
\label{choicegzwei}
\setcounter{equation}{0}
\setcounter{theorem}{0}
In this section we will prove Theorem \ref{main}
in the case \reff{gzwei}.

In order to introduce Morrey spaces and Sobolev-Morey spaces (cf., e.g. \cite{BenF,Chen,Gia,Kufner,Sa,Troi}) we denote for $x \in \Omega$ and $r>0$
$$
%\label{Omxrdef}
\Omega_{x,r}:=\{\xi\in \Omega:\; \|\xi-x\|<r\},
\mbox{ where $\|\cdot\|$ is the Euclidean norm in } \R^N.
$$
Take $\la \in [0,N)$.
The Morrey space $L^{2,\la}(\Omega;\R^n)$ is the subspace of the Lebesgue $L^2(\Omega;\R^n)$ with norm
$$
\|u\|_{2,\la}:=\sup_{x\in \Omega, r\in (0,1]}r^{-\la/2}\left(\sum_{\al=1}^n\int_{\Omega_{x,r}}|u^\al(\xi)|^2d\xi\right)^{1/2} 
$$
which is defined by
$
L^{2,\la}(\Omega;\R^n):=\{u\in L^2(\Omega;\R^n):
\|u\|_{2,\la}<\infty\}$. 
The Morrey space $L^{2,\la}(\Omega;\R^n)$
is continuously embedded into the Lebesgue space
$L^2(\Omega;\R^n)$, and 
\bee
\label{pembed}
%\label{LMemb}
L^p(\Omega;\R^n) 
\mbox{ is continuously embedded into }
L^{2,\la}(\Omega;\R^n) \mbox{ for } p \ge 2 \mbox{ and }
\la= N(1-2/p).
\ee

The Sobolev-Morrey space $W^{1,2,\la}(\Omega;\R^n)$ is the subspace of 
the Sobolev space  $W^{1,2}(\Omega;\R^n)$ 
with norm
$$
\|u\|_{1,2,\la}:=\left(\sum_{\al=1}^n\left(\int_\Omega|u^\al(x)|^2dx+\sum_{i=1}^N\|\partial_{x_i}u^\al\|^2_{2,\la}\right)\right)^{1/2},
$$
which is defined by
$W^{1,2,\la}(\Omega;\R^n):=\{u\in W^{1,2}(\Omega;\R^n):\;
\|u\|_{1,2,\la}<\infty\}$.
It is well-known that
\bee
\label{embed}
\left.
\begin{array}{l}
W^{1,2,\la}(\Omega;\R^n)
\mbox{ is compactly embedded into }
C^ {0,\al}(\overline \Omega;\R^n)\\
\displaystyle\mbox{for } N-2<\la< N \mbox{ and } 0 \le \al <1-\frac{N-\la}{2}.
\end{array}
\right\}
\ee
Further, we denote $W_0^{1,2,\la}(\Omega;\R^n):=W^{1,2,\la}(\Omega;\R^n)\cap W_0^{1,2}(\Omega;\R^n)$.

And finally, the Sobolev-Morrey space $W^{-1,2,\la}(\Omega;\R^n)$ (sometimes called 
Sobolev-Morrey space of functionals)
is the subspace of $W^{-1,2}(\Omega;\R^n)=W_0^{1,2}(\Omega;\R^n)^*$ 
with norm 
$$
\|\phi\|_{-1,2,\la}:=\sup\left\{
r^{-\la/2}\langle \phi,\vp\rangle_{1,2}:\;
x\in \Omega,r\in (0,1],\vp \in W_0^{1,2}(\Omega;\R^n),\|\vp\|_{1,2}\le 1, \mbox{supp} \,\vp \subset \Omega_{x,r}\right\},
$$
which is defined by
$W^{-1,2,\la}(\Omega;\R^n):=\{\phi\in W^{-1,2}(\Omega;\R^n):\;
\|\phi\|_{-1,2,\la}<\infty\}$, and it is continuously embedded into $W^{-1,2}(\Omega;\R^n)$.
%Here $\|\cdot\|_{1,2}$ is the norm in the Sobolev space $W^{1,2}(\Omega;\R^n)$, and 
%$\langle\cdot,\cdot\rangle_{1,2}:
%W_0^{1,2}(\Omega;\R^n)\times W^{-1,2}(\Omega;\R^n)\to \R$
%is the dual mapping (as introduced in Subsection \ref{Sobolev}.) 
%for vector functions).
It is well-known (cf. e.g. \cite[Theorem 3.9]{Griep})
that 
\bee
\label{Dprop}
D \mbox{ is bounded from $L^{2,\la}(\Omega;\R^{nN})$ into $W^{-1,2,\la}(\Omega;\R^n)$ for all  $\la \in [0,N)$.}
\ee
%for all $\om \in [0,N)$ 
%and $p\ge 2N/(N-\om)$ 
%the linear map 
%$f=[f_i^\al]
%\in L^\infty(\Omega;\R^{nN})
%\mapsto \phi_f\in W^{-1,2}
%(\Omega;\R^n)$, which is defined in 
%\reff{fdef},
%is well-defined and bounded from $L^{\infty}(\Omega;\R^{nN})$
%\times L^{\infty}(\Omega)$ 
%into $W^{-1,2,\la}(\Omega;\R^n)$
%for all $\la \in [0,N)$.

In order to prove Theorem \ref{main} in the case \reff{gzwei} we suppose
\reff{Omass}--\reff{H}, and  we suppose that the diffusion tensor functions in \reff{BVP}
are triangular, i.e.
$$
a_{ij}^{\al \beta}(\ve,x)=0
\mbox{ for all } \al>\beta \mbox{ and }
\ve>0 \mbox{ and almost all } x \in \Omega,
$$
and we use the results of Section \ref{rift} with
\bee
\label{UVdef2}
U:=W_0^{1,2,\lambda}(\Omega;\R^n)
\mbox{ and } V:=W^{-1,2,\lambda}(\Omega;\R^n),
\ee
where $\lambda>N-2$ will be choosen below.
Again, we have to verify the conditions \reff{Uass}--\reff{supass} for this choice.

Conditions \reff{Uass} and \reff{Vass}
follow from the continuous embeddings 
$L^{2,\la}(\Omega) \hookrightarrow
L^{2}(\Omega)$ and 
$W^{-1,2,\la}(\Omega) \hookrightarrow
W^{-1,2}(\Omega)$
and the compact embedding
\reff{embed}.
Condition
\reff{hatAass} is satisfied 
for $A_\ve$ because of
\begin{eqnarray*}
r^{-\la/2}\left|\int_{\Omega_{x,r}} a_{ij}^{\al \beta}(\ve,\xi)\partial_{x_j}u^\beta(\xi)\partial_{x_i}\vp^\al(\xi) d\xi\right|
&\le& \mbox{const }r^{-\la/2}
\sum_{\beta=1}^n\sum_{j=1}^N
\left(\int_{\Omega_{x,r}} |\partial_{x_j}u^\beta(\xi)|^2 d\xi\right)^{1/2}\\  
&\le& \mbox{const }\|u\|_{1,2,\la}
\end{eqnarray*}
for all $x \in \Omega$, $r>0$, $u \in W^{1,2,\la}(\Omega;\R^n)$ and  $\vp \in W_0^{1,2}(\Omega;\R^n)$ with $\mbox{supp}\, \vp \subset \Omega_{x,r}$ and $\|\vp\|_{1,2} \le 1$, where
the constants do not depend on $x$, $r$, $u$ and $\vp$.
And similarly for $\hat A$.

Condition \reff{hatAass} for $D$ with sufficiently small $\la>N-2$ follows from \reff{Dprop} and the continuous embedding (cf. \reff{pembed})
$$
L^{p_0}(\Omega;\R^{nN}) \hookrightarrow
L^{2,\la_0}(\Omega;\R^{nN})
\mbox{ with } \la_0:=N(1-2/p_0).
$$

Further, conditions \reff{Bass} and \reff{supass} with sufficiently small $\la>N-2$ follow from assumption \reff{mona}  
and Theorem \ref{Maxreg2} below, which imply the following:
%the following theorem, which  
%implies that 
%also conditions \reff{Bass} and \reff{supass} are satisfied. 
%It follows from assumptions \reff{aass}--\reff{mona}, \reff{monb} and \reff{gzwei}, and it 
%is proved in  \cite[Theorem 4.1]{Griep} and
%\cite[Lemma 6.2 and Theorem 6.3]{GR}:
\begin{theorem}
\label{maxreg2}
There exists $\la_1\in (N-2,N)$ 
such that 
for all  $\ve>0$ and
$\la\in[0,\la_1]$ the linear  operators 
$A_\ve$ 
are bijective from $W_0^{1,2,\la}(\Omega;\R^n)$ onto $W^{-1,2,\la}(\Omega;\R^n)$.
Moreover,
$$
\sup_{\ve>0}\|A_\ve^{-1}\|_{{\cal L}(W^{-1,2,\la}(\Omega;\R^n);W_0^{1,2,\la}(\Omega;\R^n))}  
<\infty
\mbox{ for all } \la \in [0,\la_1].
$$
\end{theorem}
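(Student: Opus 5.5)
The plan is to derive Theorem \ref{maxreg2} from the maximal regularity result Theorem \ref{Maxreg2} of Appendix 1, which I take to give the following. For triangular coefficient tensors satisfying \reff{aass}, \reff{mona} and \reff{gzwei}, there is $\la_1\in(N-2,N)$ such that for $\la\in[0,\la_1]$ the operator $A_\ve$ maps $W_0^{1,2,\la}$ isomorphically onto $W^{-1,2,\la}$. Moreover, both $\la_1$ and the bound on the inverse depend only on $\Omega$, $N$, $n$, the $L^\infty$-bound of the coefficients and the ellipticity constant in \reff{mona}. Since \reff{aass} and \reff{mona} are uniform in $\ve>0$, this yields the $\ve$-independent $\la_1$ and the uniform bound at once. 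The work therefore lies in checking that the appendix result really has this uniform form, or, if it is only stated for one fixed tensor, in showing how its constants depend on the data.

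Injectivity and the reduction to an a priori estimate are immediate. By the Lax--Milgram lemma $A_\ve$ is a bijection $W_0^{1,2}\to W^{-1,2}$. Since $W^{-1,2,\la}\subset W^{-1,2}$, for $\phi\in W^{-1,2,\la}$ the unique solution $u=A_\ve^{-1}\phi\in W_0^{1,2}$ exists, and what remains is to prove $u\in W^{1,2,\la}$ with
$$\|u\|_{1,2,\la}\le C\|\phi\|_{-1,2,\la}$$
for a constant $C$ independent of $\ve$. That $A_\ve$ maps $W_0^{1,2,\la}$ into $W^{-1,2,\la}$ was already verified above when checking \reff{hatAass}.

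To prove the estimate in the triangular case \reff{gzwei}, I would proceed by backward induction on the component index. The last equation, $\al=n$, involves only $u^n$: it is a scalar equation $\partial_{x_i}(a_{ij}^{nn}\partial_{x_j}u^n)=\phi^n$, and $a^{nn}$ satisfies scalar ellipticity, obtained from \reff{mona} by taking $v$ supported in the $n$-th component. For scalar equations with $L^\infty$ coefficients, De Giorgi--Nash theory gives a uniform H\"older exponent. Campanato-type comparison arguments then give Morrey regularity $\|\nabla u^n\|_{2,\la}\le C\|\phi^n\|_{-1,2,\la}$ for all $\la$ below $N-2+2\al_{DG}$, which exceeds $N-2$, with constants depending only on the ellipticity and the bounds. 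Next, the equation for $u^{\al}$ has principal part $a^{\al\al}$ and right-hand side $\phi^\al-\sum_{\beta>\al}\partial_{x_i}(a_{ij}^{\al\beta}\partial_{x_j}u^\beta)$. By \reff{Dprop} together with the already established Morrey bounds on $\nabla u^\beta$ for $\beta>\al$, this right-hand side lies in $W^{-1,2,\la}$. Iterating down to $\al=1$ gives the full estimate, with $\la_1$ taken as the minimum of the scalar thresholds.

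The main obstacle is the scalar Morrey estimate up to the boundary of a merely Lipschitz domain, with constants uniform in the coefficients. Near the boundary one needs the De Giorgi boundary H\"older estimate, which holds because Lipschitz domains satisfy an exterior measure (cone) condition. The right-hand side in $W^{-1,2,\la}$ is handled by the standard comparison on $\Omega_{x,r}$: split $u$ into an $a$-harmonic part, which decays like $r^{N-2+2\al_{DG}}$, and a remainder, which is controlled by Caccioppoli and the $\phi$-norm. These are exactly the ingredients of Theorem \ref{Maxreg2}, so I would cite it and only verify the uniformity of its constants.
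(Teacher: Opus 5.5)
Your proposal is correct and follows the paper's route: the paper gets Theorem \ref{maxreg2} directly from Theorem \ref{Maxreg2}. It notes that \reff{aass}, \reff{mona} and \reff{gzwei} put all tensors $[a_{ij}^{\al\beta}(\ve,\cdot)]$, $\ve>0$, into a single class ${\cal M}^t_r$ for $r$ small, and that theorem already states the supremum over ${\cal M}^t_r$ with $\la_r$ depending only on $r$, so your concern about uniformity is settled by the cited statement. Your backward-induction sketch (a scalar De Giorgi--Campanato estimate for $u^n$, then the terms $\partial_{x_i}(a_{ij}^{\al\beta}\partial_{x_j}u^\beta)$ with $\beta>\al$ moved to the right-hand side via \reff{Dprop}) is a sound outline of why the cited result holds, but the paper does not reprove it.
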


Hence, Theorem \ref{main} in the case \reff{gzwei} follows from Theorem \ref{Banach} with the choice \reff{UVdef2} with arbitrary
$$
N-2<\la \le \min\{\la_0,\la_1\}.
$$

\section{Appendix 1: About Meyers and Morrey estimates for solutions to linear elliptic systems}
\label{superreg}
\setcounter{equation}{0}
\setcounter{theorem}{0}
Let $\Omega$ be 
a bounded Lipschitz domain in $\R^N$.
For $r \in (0,1)$ we denote by ${\cal M}_r$ the set of all diffusion tensor functions
$[a_{ij}^{\al \beta}] \in L^\infty(\Omega;\R^{n^2N^2})$ such that
\begin{eqnarray*}
&&\essinf\left\{a^{\al \beta}_{ij}(x)v^\al_i v^\beta_j:\; x\in \Omega,\,v \in \R^{nN},\; v^\al_iv^\al_i=1\right\}>r,\\
&&\esssup\left\{\left|a_{ij}^{\al \beta}(\ve,x)\right|:\, x \in \Omega,\, \al,\beta=1,\ldots,n,\, 
i,j=1,\ldots,N\right\}<1/r,
\end{eqnarray*}
and for $a=[a_{ij}^{\al \beta}] \in {\cal M}_r$ we denote $A_a\in  \LL(W_0^{1,2}(\Omega;\R^n);W^{-1,2}(\Omega;\R^n))$ by
$$
\langle A_a u,\vp\rangle_{1,2}:=\int_\Omega a_{ij}^{\al \beta}(x)\partial_{x_j}u^\beta(x)
\partial_{x_i}\vp^\al(x)dx
\mbox{ for all } u,\vp \in W_0^{1,2}(\Omega;\R^n).
$$

The following result is about maximal Sobolev regularity for linear elliptic systems with non-smooth data in the version of 
K. Gr\"oger, see \cite[Theorems 1 and 2 and Remark 14]{G}:

\begin{theorem}
\label{Maxreg1}
For all $r \in (0,1)$ there exists $p_r>2$ such that for all $a \in {\cal M}_r$ and all $p \in 
[2,p_r]$ the linear operator $A_a$
is an isomorphism from 
$W_0^{1,p}(\Omega;\R^n)$ onto $W^{-1,p}(\Omega;\R^n)$. Moreover,
$$
%\label{Ainverta}
\sup_{a \in {\cal M}_r}
\|A_a^{-1}\|_{\LL(W^{-1,p}
(\Omega;\R^n);W_0^{1,p}(\Omega;\R^n))}
<\infty \mbox{ for all }
p \in 
[2,p_r].
$$
\end{theorem}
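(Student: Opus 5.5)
The plan is to prove Theorem \ref{Maxreg1} with Gr\"oger's perturbation argument, which yields the Meyers estimate with explicit dependence only on $r$, $\Omega$, $n$, $N$. First I treat the reference operator $-\Delta$ acting componentwise, i.e. $\langle J u,\vp\rangle_{1,2}:=\int_\Omega \partial_{x_i}u^\al\partial_{x_i}\vp^\al dx$. I will use the known fact that $J$ is an isomorphism from $W_0^{1,p}(\Omega;\R^n)$ onto $W^{-1,p}(\Omega;\R^n)$ for all $p$ in some interval $[2,\bar p]$ with $\bar p>2$ (Lipschitz domains suffice, e.g. Jerison--Kenig, or Gr\"oger's regularity notion). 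The function $M_p:=\|J^{-1}\|_{\LL(W^{-1,p};W_0^{1,p})}$ is finite there, and $M_2=1$ if $W_0^{1,2}$ carries the norm $\|\nabla u\|_2$. By Riesz--Thorin interpolation between $p=2$ and $p=\bar p$, $M_p$ is continuous in $p$ at $p=2$ and satisfies $\limsup_{p\downarrow 2}M_p\le 1$.

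Next, for $a\in{\cal M}_r$ and a parameter $t>0$, I rewrite $A_au=\phi$ as the fixed point equation
$$
u=J^{-1}\big(J-tA_a\big)u+tJ^{-1}\phi .
$$
The key algebraic step is the pointwise estimate for the matrix $I-ta(x)$ acting on $\R^{nN}$. Ellipticity gives $a v\cdot v\ge r|v|^2$, and boundedness gives $|av|\le C_{n,N}|v|/r$. Hence $|(I-ta)v|^2\le (1-2tr+t^2C^2/r^2)|v|^2$, and choosing $t=r^3/C^2$ gives $|(I-ta)v|\le k|v|$ with $k=k(r)<1$. Since $\langle(J-tA_a)u,\vp\rangle=\int (I-ta)\nabla u\cdot\nabla\vp\,dx$, the operator $J-tA_a$ has norm at most $k$ from $W_0^{1,p}$ to $W^{-1,p}$. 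Here I use the $L^p$-norm of $|\nabla u|$ measured with the Euclidean norm on $\R^{nN}$; this norm is equivalent to $\|\cdot\|_{1,p}$ by Poincar\'e.

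Then I choose $p_r>2$ with $M_pk<1$ for all $p\in[2,p_r]$, which is possible by the continuity of $M_p$ at $2$. For such $p$ the fixed point map is a contraction on $W_0^{1,p}$. So $A_a$ is bijective from $W_0^{1,p}$ onto $W^{-1,p}$, with $\|A_a^{-1}\|\le tM_p/(1-kM_p)$. This bound depends only on $r$ and $p$, which gives the uniform estimate over ${\cal M}_r$. Injectivity on $W_0^{1,p}\subset W_0^{1,2}$ also follows directly from Lax--Milgram.

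The main obstacle is the norm bookkeeping in the interpolation step. I need $\limsup_{p\to2}M_p\le 1$ in exactly the norms for which the pointwise contraction constant $k$ transfers, namely $L^p(\Omega;\R^{nN})$ with the Euclidean fibre norm and the dual norm induced by $\vp\mapsto\|\nabla\vp\|_{p'}$. I would set this up by identifying $W^{-1,p}$ with the quotient of $L^p(\Omega;\R^{nN})$ via $D$. Then $J^{-1}D$ is the map $g\mapsto\nabla u$, a projection that is an orthogonal projection (norm $1$) on $L^2$ and bounded on $L^{\bar p}$. Riesz--Thorin applied to this map on $L^p(\Omega;\R^{nN})$ then gives the required continuity.
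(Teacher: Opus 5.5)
Your proposal is correct and is essentially Gr\"oger's argument, which is all the paper offers for this statement: the paper gives no proof of its own, only the reference to Gr\"oger's Theorems 1 and 2 and Remark 14. The steps match: comparison with the componentwise Dirichlet Laplacian $J$, the bound $\limsup_{p\downarrow 2}\|J^{-1}\|_{\LL(W^{-1,p};W_0^{1,p})}\le 1$ via Riesz--Thorin after realizing $W^{-1,p}$ as a quotient of $L^p(\Omega;\R^{nN})$ through $D$, and the contraction $u\mapsto J^{-1}(J-tA_a)u+tJ^{-1}\phi$ with a pointwise factor $k(r)<1$ that uses the full Legendre condition.

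The one ingredient you take as given is that $J$ is an isomorphism from $W_0^{1,\bar p}(\Omega;\R^n)$ onto $W^{-1,\bar p}(\Omega;\R^n)$ for some $\bar p>2$. This is exactly where the Lipschitz geometry of $\Omega$ enters. The real-versus-complex scalars issue in Riesz--Thorin is harmless, because the complexified projection still has norm $1$ at $p=2$.
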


\begin{remark}
$W^{1,p}$-estimates of solutions to elliptic boundary value problems
%of the type $\|A_a^{-1}\phi\|_{1,p} \le \mbox{const}\,\|\phi\|_{-1,p}$
%\reff{Ainverta}
often are called Meyers type estimates 
because of the initiating paper \cite{ME1975}
of N.G. Meyers, see also \cite{Ga,M}. For the case of smooth boundaries $\partial \Omega$ see 
\cite[Theorem 4.1]{Ben}. See also \cite{CP} for $\Omega$ being a cube and for continuous diffusion coefficients as well as for certain transmission problems and applications to linear elliptic periodic homogenization.
Meyers type estimates are applied to homogenization problems for elliptic boundary value problems with localized defects also in \cite{BDL}.
\end{remark}

For $r \in (0,1)$ denote by 
${\cal M}^{t}_r(\Omega)$ the set of all triangular elements of 
${\cal M}_r(\Omega)$, i.e.
$$
{\cal M}^{t}_r(\Omega):=\left\{[a_{ij}^{\al \beta}] \in
{\cal M}_r(\Omega):\; 
a_{ij}^{\al \beta}=0 \mbox{ for } \al>\beta\right\}.
$$
In \cite[Theorem 4.1]{Griep} and
\cite[Lemma 6.2 and Theorem 6.3]{GR}
there is proved the following
maximal Sobolev-Morrey regularity result for almost triangular elliptic systems with non-smooth data,
which is similar to Theorem \ref{Maxreg1}:

\begin{theorem}
\label{Maxreg2}
For all $r \in (0,1)$ there exists $\la_r>N-2$ such that for all $a=[a_{ij}^{\al \beta}] \in {\cal M}^t_r$ 
and for all $\la \in 
[0,\la_r]$ the linear operator $A_a$
is an isomorphism from 
$W_0^{1,2,\la}(\Omega;\R^n)$ onto $W^{-1,2,\la}(\Omega;\R^n)$.
Moreover,
$$
\sup_{a \in {\cal M}^{t}_r(\Omega)}\|A_a^{-1}\|_{\LL(W^{-1,2,\la}(\Omega;\R^n);W_0^{1,2,\la}(\Omega;\R^n))}
<\infty
\mbox{ for all } \lambda \in [0,\lambda_r].
$$
\end{theorem}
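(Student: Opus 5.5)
The plan is to exploit the triangular structure and reduce the statement to scalar equations, which can be solved one after another. For $a\in{\cal M}^t_r$ the $\al$-th equation of $A_a u=\phi$ reads
$$
\partial_{x_i}\big(a^{\al\al}_{ij}\partial_{x_j}u^\al\big)=\phi^\al-\sum_{\beta>\al}\partial_{x_i}\big(a^{\al\beta}_{ij}\partial_{x_j}u^\beta\big).
$$
Testing the Legendre condition with $v$ concentrated in the $\al$-th row shows that each diagonal block $[a^{\al\al}_{ij}]$ is uniformly elliptic with constant $r$ and bounded by $1/r$.

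Injectivity, and existence of a solution $u\in W_0^{1,2}(\Omega;\R^n)$ for $\phi\in W^{-1,2,\la}\subset W^{-1,2}$, follow from the Lax--Milgram lemma, exactly as for \reff{invest}. It therefore suffices to prove an a priori bound
$$
\|u\|_{1,2,\la}\le C\|\phi\|_{-1,2,\la},
$$
with $C$ depending only on $r$, $\Omega$ and $\la$. This bound gives surjectivity onto $W^{-1,2,\la}$ and the uniform estimate of the inverse at the same time.

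\textbf{Step 1 (scalar estimate).} Prove the statement for $n=1$: for every uniformly elliptic scalar $b\in{\cal M}_r$ and every $\la\in[0,\la_r]$, the weak solution of $\partial_{x_i}(b_{ij}\partial_{x_j}w)=\psi$, $w\in W^{1,2}_0$, satisfies $\|w\|_{1,2,\la}\le C\|\psi\|_{-1,2,\la}$. This proceeds in three parts.
\begin{itemize}
\item \emph{Decay for the homogeneous equation.} The De Giorgi--Nash--Moser theorem gives a uniform H\"older exponent $\gamma=\gamma(r,N)>0$. Combined with Caccioppoli's inequality, it yields, for homogeneous solutions on $\Omega_{x,R}$ vanishing on $\partial\Omega\cap B_R(x)$,
$$
\int_{\Omega_{x,\rho}}|\nabla w|^2\le C\Big(\frac{\rho}{R}\Big)^{N-2+2\gamma}\int_{\Omega_{x,R}}|\nabla w|^2 .
$$
At the boundary one uses that Lipschitz domains satisfy a uniform exterior measure (capacity) condition, so boundary H\"older continuity holds uniformly; alternatively one flattens by a bi-Lipschitz map, which keeps the coefficients in some ${\cal M}_{r'}$.
\item \emph{Inhomogeneous data.} On $\Omega_{x,R}$, split $w=w_1+w_2$, where $w_1$ solves the equation with datum $\psi$ localized and zero boundary values on $\Omega_{x,R}$, and $w_2$ is homogeneous. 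The energy estimate gives $\int|\nabla w_1|^2\le C R^{\la}\|\psi\|^2_{-1,2,\la}$.
\item \emph{Iteration.} The standard Campanato iteration lemma then gives $\int_{\Omega_{x,\rho}}|\nabla w|^2\le C\rho^\la(\|\nabla w\|_2^2+\|\psi\|^2_{-1,2,\la})$ for all $\la<N-2+2\gamma$. The Lax--Milgram bound controls $\|\nabla w\|_2$ by $\|\psi\|_{-1,2}\le C\|\psi\|_{-1,2,\la}$, which finishes the scalar case.
\end{itemize}
Setting $\la_r:=N-2+\gamma$ gives $\la_r>N-2$, and $\la_r<N$ since $\gamma<1$.

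\textbf{Step 2 (back-substitution).} Run a downward induction on $\al=n,n-1,\dots,1$. For $\al=n$ the right-hand side is just $\phi^n$, so Step 1 gives $\|u^n\|_{1,2,\la}\le C\|\phi\|_{-1,2,\la}$. Now suppose the bound holds for all components $u^\beta$ with $\beta>\al$. Then $a^{\al\beta}_{ij}\partial_{x_j}u^\beta\in L^{2,\la}$, because $a^{\al\beta}_{ij}$ is bounded, with norm $\le C\|\phi\|_{-1,2,\la}$. By \reff{Dprop} its divergence therefore lies in $W^{-1,2,\la}$ with the same bound. The right-hand side of the $\al$-th scalar equation is thus controlled in $W^{-1,2,\la}$, and Step 1 gives $\|u^\al\|_{1,2,\la}\le C\|\phi\|_{-1,2,\la}$. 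After $n$ steps all constants depend only on $r$, $n$, $\Omega$ and $\la$, which yields the supremum bound over ${\cal M}^t_r(\Omega)$.

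\textbf{Main obstacle.} I expect Step 1 at the boundary to be the hard part: getting the decay exponent $N-2+2\gamma$ uniformly up to a merely Lipschitz $\partial\Omega$, and doing the localization correctly when the datum is only a functional in $W^{-1,2,\la}$ rather than a function. I would first try the bi-Lipschitz flattening combined with odd reflection across the flat boundary; for the scalar equation this preserves uniform ellipticity and the Dirichlet condition, so the interior De Giorgi estimate applies directly.
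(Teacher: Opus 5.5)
Your proposal is correct. The paper does not prove this theorem itself; it cites it from Griepentrog and Griepentrog--Recke. Your route is the standard argument behind those citations: uniform scalar Sobolev--Morrey regularity for $\la<N-2+2\gamma$, obtained from De Giorgi--Nash--Moser decay (interior and Lipschitz-boundary) plus Campanato iteration, followed by back-substitution through the triangular structure. Your back-substitution (which could also replace Lax--Milgram for existence) uses only uniform ellipticity of the diagonal blocks $[a^{\al\al}_{ij}]$, and this is exactly why the remark after the theorem can weaken the hypothesis to \reff{rema}.
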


\begin{remark}
%Take $r \in (0,1)$ and 
If $a=\left[a_{ij}^{\al \beta}\right]  \in {\cal M}^t_r$, then
\bee
\label{rema}
\left.
\begin{array}{l}
\mbox{for all $\al=1,\ldots,n$ the $N\times N$ matrices $\left[a_{ij}^{\al \al}(x)\right]$ (no summation over $\al$)}\\ 
\mbox{are positive definite uniformly with respect to $x \in \Omega$,}
\end{array}
\right\}
\ee
i.e. condition \reff{rema} is weaker than condition $a \in 
{\cal M}^t_r$ (with certain $r \in (0,1)$).
But even this weaker condition implies that
$A_a$ is an isomorphisms from 
$W_0^{1,2,\la}(\Omega;\R^n)$ onto $W^{-1,2,\la}(\Omega;\R^n)$ with appropriate $\la>N-2$.
More exactly, for all $a \in L^\infty(\Omega)^{n^2N^2}$ with $a_{ij}^{\al \beta}=0$ for $\al>\beta$ and with \reff{rema} the linear operators $A_a$ are isomorphisms from 
$W_0^{1,2,\la}(\Omega;\R^n)$ onto $W^{-1,2,\la}(\Omega;\R^n)$ with appropriate $\la>N-2$.
\end{remark}

\begin{remark}
The set of isomorphisms between two Banach spaces is open with respect to the uniform operator norm.
Therefore for all $a_0\in {\cal M}^t_r$ 
and for all $\la \in 
[0,\la_r]$ there exists $\ve_{a_0,\la}>0$  such that for all $a\in {\cal M}_r$ 
with $\|a-a_0\|_\infty <\ve_{a_0,\la}$
the linear operator $A_a$
is an isomorphism from 
$W_0^{1,2,\la}(\Omega;\R^n)$ onto $W^{-1,2,\la}(\Omega;\R^n)$.
In this sense one can say that the linear operator $A_a$ is
an isomorphism from 
$W_0^{1,2,\la}(\Omega;\R^n)$ onto $W^{-1,2,\la}(\Omega;\R^n)$ if the diffusion tensor function $a$ is close to be triangular.
It seems to be an open question if $\ve_{a_0,\la}$ depends on $a_0$ via $r$ only or not.
\end{remark}

\begin{remark}
Consider the boundary value problem \reff{BVP} with $u$-independent $f_{i}^{\al}$, i.e. $[f_{i}^{\al}(x,u)]=[g_i^\al(x)]=:G(x)$ with $G \in L^{p_0}(\Omega)^{nN}$ and $p_0>N$. Suppose \reff{Omass}-\reff{mona}, \reff{H} and \reff{gzwei}. Then Theorem \ref{Maxreg2} yields that $u_\ve :=-A_\ve^{-1}DG \in C(\overline \Omega)^n$. But the homogenized diffusion tensor function $[\hat a_{ij}^{\al \beta}]$ is not triangular, in general. Therefore 
Theorem \ref{Maxreg2} cannot be applied to the homogenized boundary value problem \reff{hombvp}, in general. But, anyway,  as in the proof of Lemma \ref{barulemma} one can show  that $u_0 :=-\hat A^{-1}DG \in C(\overline \Omega)^n$ and
$$
\|u_\ve-u_0\|_\infty=\|(A_\ve^{-1}-
\hat A^{-1})DG\|_\infty \to 0
\mbox{ for } \ve \to 0.
$$
\end{remark}

\section{Appendix 2: About \boldmath$C^1$\unboldmath-smooth superposition operators
\boldmath$C(\overline\Omega) \to L^p(\Omega)$\unboldmath
}
\label{super}
\setcounter{equation}{0}
\setcounter{theorem}{0}
Let $\Omega$ be 
a bounded Lipschitz domain in $\R^N$,
and let be given a function $f:\Omega\times \R \to \R$ 
such that
\bee
\label{car}
\mbox{$f(x,\cdot)$ is differentiable for almost all $x\in \Omega$, and
$f(\cdot,u)$ is measurable for  all $u\in \R$.
} 
\ee
Further, suppose that there exists $p\ge 2$ such that for all compact
$K\subset \R$ there exists $g_K \in L^{p}(\Omega)$ such that
\bee 
\label{pesti}
|f(x,u)|+|\partial_uf(x,u)|
\le g_K(x)
\mbox{ for almost all $x \in \Omega$
and all $u \in K$},
\ee
and
\bee 
\label{pconti}
\left.
\begin{array}{l}
\mbox{for all $\ve>0$ there exists $\delta_{\ve,K}>0$ such that}\\
\mbox{$|f(x,u)-f(x,v)|+|\partial_uf(x,u)-\partial_uf(x,u)|
\le \ve g_K(x)$}\\
\mbox{for almost all $x \in \Omega$
and all $u,v \in K$
with $|u-v|\le \delta_{\ve,K}$.}
\end{array}
\right\}
\ee
\begin{lemma}
\label{superpos}
Suppose \reff{car}--\reff{pconti}. Then
there exists $F \in C^1(C(\overline\Omega);
L^{p}(\Omega))$ such that
\bee
\label{Feq}
(F(u))(x)=f(x,u(x))
\mbox{ for almost all } x \in \Omega \mbox{ and all } u \in C(\overline \Omega).
\ee
\end{lemma}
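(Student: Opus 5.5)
We define $F$ directly by \reff{Feq}, show that it maps $C(\overline\Omega)$ into $L^p(\Omega)$ and is continuous, and then show that it is Fréchet differentiable with derivative
$$
(F'(u)v)(x):=\partial_uf(x,u(x))v(x).
$$
The argument is a uniform-continuity argument for Nemytskii operators, with \reff{pesti} supplying integrable majorants and \reff{pconti} supplying the equicontinuity in $u$.

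\emph{Measurability and integrability.} Fix $u\in C(\overline\Omega)$ and choose the compact interval $K:=[-\|u\|_\infty-1,\|u\|_\infty+1]$. By \reff{car}, $f$ is measurable in $x$ and continuous in $u$ (differentiability implies continuity), so $f$ is Carathéodory. Hence $x\mapsto f(x,u(x))$ is measurable; alternatively, approximate $u$ uniformly by simple functions and pass to a.e. limits. The same reasoning applies to $\partial_uf(x,u(x))$, which is an a.e. limit of difference quotients of Carathéodory functions. By \reff{pesti}, both functions are bounded by $g_K\in L^p(\Omega)$. Thus $F(u)\in L^p(\Omega)$, and the multiplication operator $v\mapsto \partial_uf(\cdot,u)v$ is bounded from $C(\overline\Omega)$ to $L^p(\Omega)$ with norm at most $\|g_K\|_p$.

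\emph{Differentiability.} Fix $u$ and $K$ as above, and take $\|v\|_\infty\le 1$, so that $u+sv\in K$ for all $s\in[0,1]$. For a.e. $x$, the function $s\mapsto f(x,u(x)+sv(x))$ is differentiable, so the mean value theorem gives an intermediate point $\theta_x\in(0,1)$ with
$$
f(x,u+v)-f(x,u)-\partial_uf(x,u)v=\big(\partial_uf(x,u+\theta_xv)-\partial_uf(x,u)\big)v .
$$
Given $\ve>0$, condition \reff{pconti} shows that if $\|v\|_\infty\le\delta_{\ve,K}$, the modulus of the right-hand side is at most $\ve g_K(x)|v(x)|$. Taking $L^p$ norms, the remainder is bounded by $\ve\|g_K\|_p\|v\|_\infty$, which is $o(\|v\|_\infty)$. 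The point $\theta_x$ need not be measurable in $x$, but this is harmless: only the left-hand side, which is measurable, is integrated, and the pointwise bound is all that is used.

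\emph{Continuity of $F$ and $F'$.} For $\|u-w\|_\infty\le\delta_{\ve,K}$ with both $u,w$ valued in $K$, condition \reff{pconti} gives
$$
|(F'(u)v-F'(w)v)(x)|\le \ve g_K(x)\|v\|_\infty ,
$$
so $\|F'(u)-F'(w)\|\le\ve\|g_K\|_p$. The same estimate yields continuity of $F$ itself. Hence $F\in C^1$.

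The main point needing care is the measurability of $\partial_uf(\cdot,u(\cdot))$ and of the remainder term. We handle it by writing $\partial_uf(x,u(x))$ as the a.e. limit of the measurable difference quotients $k\big(f(x,u(x)+1/k)-f(x,u(x))\big)$ as $k\to\infty$. The estimates themselves are routine.
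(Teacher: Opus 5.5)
Your proposal is correct and follows essentially the same route as the paper. Both arguments use Carath\'eodory measurability, the majorant $g_K$ from \reff{pesti} to get $F(u)\in L^p(\Omega)$ and boundedness of $v\mapsto\partial_uf(\cdot,u(\cdot))v$, and \reff{pconti} to control both the Fr\'echet remainder and the continuity of $u\mapsto F'(u)$. The differences are cosmetic: you use the pointwise Lagrange mean value theorem where the paper writes the remainder as $\int_0^1\big(\partial_uf(x,u(x)+sv(x))-\partial_uf(x,u(x))\big)v(x)\,ds$, and you justify measurability of $\partial_uf(\cdot,u(\cdot))$ via difference quotients, which the paper only asserts with ``similarly''.
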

{\bf Proof } Assumption \reff{car}
yields that $f$ is a Caratheodory function, i.e. $f(x,\cdot)$ is continuous for almost all $x \in \Omega$, and $f(\cdot,x)$ is measurable for all $u \in \R$. Therefore $f(\cdot,u(\cdot))$ is measurable for all measurable $u:\Omega \to \R$
(cf., e.g. \cite[Section 1.2]{Ambro}). 

Take $u \in C(\overline\Omega)$. Then there exists a compact set $K \subset \R$ such that $u(x) \in K$ for almost all $x \in \Omega$. Therefore assumption \reff{pesti} yields that
$$
\int_\Omega|f(x,u(x))|^{p}dx
\le\int_\Omega|g_K(x)|^{p}dx<\infty.
$$
Hence, by \reff{Feq} is defined a map 
$F:C(\overline\Omega)\to L^{p}(\Omega)$.

Similarly one shows that 
$\partial_u f(\cdot,u(\cdot))v(\cdot) \in
L^{p}(\Omega)$
for all $u,v \in C(\overline\Omega)$. In other words: For any $u \in C(\overline\Omega$ there is defined a linear operator $L(u):C(\overline\Omega) \to L^{p}(\Omega)$ by
$$
(L(u)v)(x):=\partial_u f(x,u(x))v(x)
\mbox{ for almost all } x \in \Omega
\mbox{ and all } v \in C(\overline\Omega).
$$
This linear operator is bounded because assumption \reff{pesti} yields that
$$
\int_\Omega|\partial_u f(x,u(x))v(x)|^{p}dx \le \|v\|_\infty^{p}\int_\Omega|g_K(x)|^{p}dx
$$
if the compakt $K \subset \R$ is taken such that $u(x) \in K$ for almost all $x \in \Omega$.

Now, let us show that the map $u \in C(\overline\Omega) \mapsto L(u) \in \LL(C(\overline\Omega);
L^{p}(\Omega))$ is continuous. Take $u \in C(\overline\Omega)$ and a compakt $K \subset \R$ such that $u(x)+v(x) \in K$ for almost all $x \in \Omega$ and all $v \in C(\overline\Omega)$ with $\|v\|_\infty \le 1$.
Then assumption \reff{pconti} imlies that for any $\ve >0$ there exists $\delta_{\ve,K}>0$ such that
\begin{eqnarray*}
&&\int_\Omega\left|[(L(u+v)-L(u))w](x)\right|^{p}dx\\
&&=
\int_\Omega\left|\Big(\partial_uf(x,u(x)+v(x))
-\partial_uf(x,u(x))\Big)w(x)
\right|^{p}dx\le \ve^{p}\|w\|_\infty^{p}
\end{eqnarray*}
for all $u,v,w \in C(\overline\Omega)$ with $\|v\|\le \delta_{\ve,K}$.

And finally, let us show that $F$ is differentiable from $C(\overline\Omega)$ into $L^{p}(\Omega)$ and that
$F'(u)=L(u)$ for all 
$u \in C(\overline\Omega)$.
Again, take $u \in C(\overline\Omega)$ and a compakt $K \subset \R$ such that $u(x)+v(x) \in K$ for almost all $x \in \Omega$ and all $v \in C(\overline\Omega)$ with $\|v\|_\infty \le 1$.
Then assumption \reff{pconti} imlies that for any $\ve >0$ there exists $\delta_{\ve,K}>0$ such that
\begin{eqnarray*}
&&\int_\Omega\left|[F(u+v)-F(u)-L(u)v](x)\right|^{p}dx\\
&&
=\int_\Omega\left|\Big(f(x,u(x)+v(x))
-f(x,u(x))
-\partial_uf(x,u(x))\Big)v(x)
\right|^{p}dx\\
&&=\int_\Omega\left|\int_0^1\Big(\partial_uf(x,u(x)+sv(x))
-\partial_uf(x,u(x))\Big)v(x)ds
\right|^{p}dx\\
&&\le\ve^{p}  \int_\Omega |g_K(x)|^pdx \, \|v\|_\infty^{p},
\end{eqnarray*}
if $\|v\|_\infty\le \delta_{\ve,K}$.
\qed

\section*{Acknowledgements}

The author gratefully acknowledges that
his paper is the result of 
a long-standing mathematical cooperation and friendship with Jens A. Griepentrog.

\section*{Declarations}
{\bf Data availability: } No datasets were generated or analysed during the current study.\\
{\bf Conﬂict of interest: }  The author claims that there are no conﬂicts of interes

\end{document}